\newcommand{\pr}{\mathbb{P}}
\newcommand{\E}{\mathbb{E}}
\renewcommand{\d}{\mathrm{d}}
\newcommand{\erg}{Erd\H os--R\'enyi graph}
\newcommand{\W}{\mathbb{W}}
\newcommand{\1}[1]{{\mathds{1}}_{\left\{#1\right\}}}
\newcommand{\I}{\mathcal{I}}
\renewcommand{\S}{\mathcal{S}}
\newcommand{\st}{\ \ \textit{st.}}
\newtheorem{theorem}{Theorem}
\newtheorem{lemma}{Lemma}
\newtheorem{prop}{Proposition}
\DeclareMathOperator{\cov}{cov}
\title{Local-density dependent Markov processes on graphons with epidemiological applications}
\author{
	D\'aniel Keliger\\
	{\small Department of Stochastics,}\\
	{\small Budapest University of Technology and Economics}\\
	{\small e-mail: perfectumfluidum@gmail.com}\\[3mm]
	Ill\'es Horv\'ath\\
	{\small MTA-BME Information Systems Research Group}\\
	{\small e-mail: horvath.illes.antal@gmail.com}\\[3mm]
	B\'alint Tak\'acs \\
	{\small Department of Applied Analysis and Computational Mathematics}\\
	{\small E\"otv\"os Lor\'and University}\\
	{\small e-mail: takacsbm@caesar.elte.hu}
}
\date{}
\begin{document}
	
	
	\maketitle
	
\begin{abstract}
We investigate local-density dependent Markov processes on a class of large graphs sampled from a graphon, where the transition rates of the vertices are influenced by the states of their neighbors. We show that as the average degree converges to infinity, the evolution of the process in the transient regime converges to the solution of a set of non-local integro-partial differential equations. We also provide rigorous derivation for the epidemic threshold in the case of the Susceptible-Infected-Susceptible (SIS) process on such graphons.  
\end{abstract}

\section{Introduction}

Large random graphs have been used to model complex networks such as communication systems or biological and social populations, where the vertices represent individuals and the edges correspond to interactions between them \cite{easleybook}. Generally, as the network size increases, such networks can be divided into two main classes: in \emph{dense graphs}, the number of interactions of a single vertex scales with the network size, as opposed to \emph{sparse graphs}, where it does not. Dense graph models include the {\erg} and the stochastic block model, while sparse graph models include the configuration model or scale-free networks.

Many different types of processes have been examined on large networks. One of the most important applications is infection spreading models such as the SI (susceptible-infected), SIS (susceptible-infected-susceptible) and SIR (susceptible-infected-recovered) models, but there have been applications for wireless sensor networks \cite{Bruneo2011}, push-pull gossip protocols \cite{Bakhshi2010a}, peer-to-peer software updates \cite{hht2014}, biological systems \cite{Caravagna2011} or molecular biology \cite{Schlicht2008}. A common trait is that as the network size increases, a complete analysis becomes computationally infeasible; instead, mean-field approximations become relevant.

One of the pioneering mean-field results is due to Kurtz \cite{kurtz70,kurtz78}, who examined density dependent Markov population processes. Kurtz proved that as the number of components increases, in the transient regime the process converges to the solution of a system of differential equations \cite{kurtz70}.

Markov population processes can be used to model a wide range of processes, but the density dependent assumption essentially corresponds to a very restrictive assumption on network structure, e.g. it is a complete graph, where interactions can not really be considered local. Nevertheless, the results of Kurtz have been used extensively as an approximation in many scenarios, e.g. chemical reactions and queuing systems \cite{kurtz2011, telek2008}.

For other network structures where the interactions are truly local, results have been sporadic. One special example is the SIR model on the configuration model \cite{volz,volzproof}, with a mean-field limit different from Kurtz.

In this paper, we use \emph{graphons} \cite{lovaszbook}, a relatively recent tool to examine large complex network structures. Graphons typically arise as the limit of dense graph sequences \cite{orbanz}.

The main contributions of the paper are the following: we are going to define local density-dependent Markov processes on finite graphs, identify their corresponding limit processes on graphons, and prove mean-field convergence in the transient regime under mild density conditions. We also observe that the same limit does not hold for truly sparse graphs. Finally, we focus on the SIS model for which we derive the epidemic threshold in the graphon limit.

The rest of the paper is organized as follows. Section \ref{s:related} gives an overview of relevant literature. Section \ref{s:graphons} provides a background for large graphs and graphons. Section \ref{s:setup} describes the necessary setup for local-density dependent Markov processes on large graphs and graphons, and also states the main results. Sections \ref{s:pde} and \ref{s:convproof} provide the rigorous proofs for the results, with Section \ref{s:pde} focusing on regularity properties of the mean-field limit equation on graphons (with Section \ref{s:SIS} dedicated to the SIS model), and Section \ref{s:convproof} focusing on the proof of mean-field convergence. Section \ref{s:concl} concludes the work.

\section{Related works}
\label{s:related}

In this section we review some of the recent works related to dynamical systems on large networks and graphons.

In \cite{graphonrandomwalk} the behavior of  random walks on some large dense graphs is investigated. The underlying assumptions are mild in general: this includes the corresponding graph limit being in some sense connected, and the existences of a uniform lower bound for the degrees. Graphs are generated in a similar fashion to ours with the exception of only considering the dense case. From the linear nature of the random walk it is enough to consider the occupation probabilities for which a system of ODEs can be derived. The paper shows that the limiting object of such system is a well-posed non-local PDE.

In \cite{graphlimitKuramoto} the non-linear heat equation - and as a special case, the Kuramoto model -  is described on large dense graphs. The limit object is also a well-posed non-local PDE for which the rate of convergence is given in the special case when the graphon takes values from $\{0,1\}$. This  case includes systems where particles interact within a finite range scaling with the number of particles. 

In both \cite{graphonSISnoise} and \cite{graphonSIScontrol} the authors study the Susceptible-Infected-Susceptible (SIS) model  on large dense graphs. Both papers investigate a linearised ODE approximation of the SIS process based on meta-population considerations on dense graphs. The main difference is \cite{graphonSISnoise} works with an additive noise term put into the quenched ODE system artificially to study the stability of the epidemic, while \cite{graphonSIScontrol} works with an additional control term instead.

In many infection spreading models, there is a phase transition: if parameter $\beta$ is below a certain critical value $\beta_c$, known as the \emph{epidemic threshold}, the infection will die out rapidly, while if $\beta>\beta_c$, the infection will spread to a positive fraction of the entire population. $\beta_c$ has been computed for certain models such as individual-based mean-field (IBMF) models \cite{threshold2008},  SIS model on scale-free networks \cite{vesp}, while bounds have been proven for the $N$-intertwined mean-field approximation (NIMFA) model \cite{simon2017NIMFA} and for more general finite graph settings \cite{satorras2015}.

Lastly, the authors of \cite{graphonSDE} examine a system of SDE's which interact with each other through a network. They derive a large graph limit for the case when average degree approaches infinity.
 
\section{Graphs and graphons}
\label{s:graphons}

A \emph{graph} is defined as a pair $G = (V,A)$, where $V$ is a
finite set of vertices and $A: V\times V \to \mathbb{R}$ is the adjacency matrix. In this paper, we will use $V=\{1,2,\dots,N\}$ exclusively.

We only consider unweighted, simple, undirected graphs, i.e.~$a_{ij}^N\in\{0,1\}$, $a_{ii}^N=0$ and $a_{ji}^N=a_{ij}^N$ for all $i,j\in V$ (where $A=(a_{ij}^N)_{i,j\in V}$). Equivalently, we could define the graph with the set of edges $E=\{(i,j): a_{ij}^N=1,\,i,j\in V\}$ instead of $A$. The \emph{degree} of a vertex $i$ is
$$d^N(i)=\sum_{j=1}^N a_{ij}^N.$$

A \emph{graphon}, or rather, the \emph{kernel} of a graphon is defined as a measurable symmetric function $W:[0,1]\times [0,1]\to \mathbb{R}$. $\mathcal{W}$ will denote the set of all graphon kernels, while  $\mathcal{W}_0$ denotes graphon kernels that are $[0,1]\times [0,1]\to [0,1]$.

For any graphon kernel $W$, the corresponding integral operator, acting on $f\in L^1 \left([0,1] \right)$ functions is
$$(\W f)(x):=\int_0^1 W(x,y)f(y)\d y.$$

For vector-valued functions $f$, the notation $\W f$ is understood component-wise.

The \emph{degree function} of a graphon is
$$d_W(x):=(\W \mathbf{1})(x)=\int_0^1 W(x,y)\d y,$$
where $\mathbf{1}$ denotes the constant one function.

\subsubsection*{Sampling}

From any graphon kernel $W(x,y)\in\mathcal{W}_0$, we can sample a simple random graph on $N$ vertices. For a given $\W$ and $N$, we do this the following way: first, we generate $U_i^N, i=1,\dots,N$ increasingly ordered independent $U[0,1]$ random variables. Alternatively, using $U_i^N=\frac{i}{N}$ suits our goal as well. Then we set \linebreak$a_{ij}^N=1$ with probability $W(U_i^N,U_j^N)$ conditionally independently for each pair \linebreak$1\leq i<j\leq N$. Finally, $a_{ij}^N=a_{ji}^N$ and $a_{ii}^N=0$ is used. The sample graphon is also called an \emph{empirical graphon}. A sample graphon is usually denoted by $W^N$.

It will be convenient to consider sampling with an extra density parameter $\kappa=\kappa^N>0$ which may depend on $N$; when $\kappa^N$ is given, we set $a_{ij}^N=1$ with probability $\kappa^N W(U_i^N,U_j^N)$ instead.

On the other hand, any graph on $N$ vertices with adjacency matrix $A$ can be interpreted as a graphon the following way. Denoting
\begin{align}
\label{eq:IiNdef}
I_i^N=\left[\frac{i-1}N,\frac{i}N\right)
\end{align}
(with the last interval also including the endpoint $1$), and we set 
$$W(x,y)=a_{ij}^N \quad \textrm{for} \quad x\in I_i^N, y\in I_j^N, i,j=1,\dots,N.$$

\subsubsection*{Norms}

Let $ \mathcal{S}$ be a finite set. The norm $ \| \cdot \| $ for $v \in \mathbb{R}^\mathcal{S} $ is chosen to be the $ L^1$ norm $$ \left \|v \right \|=\sum_{s \in \mathcal{S}}\left |v_i \right |. $$
This does not result in a loss of generality since $\mathcal{S} $ is finite.

The \emph{cut norm} of a kernel is
$$\| W\|_{\square}=\sup_{S,T\subseteq [0,1]}\left| \int_{S\times T}W(x,y)\d x\d y\right|.$$

Actually, we identify kernels $W_1$ and $W_2$ such that $\| W_1-W_2\|_{\square}=0$, and factorize the spaces $\mathcal{W}$ and $\mathcal{W}_0$ accordingly.

For $f:[0,1]\to \mathbb{R}$ functions, the $L^p$-norm is defined as
$$\|f\|_p=\left(\int |f(x)|^p\d x\right)^{1/p},$$
and the corresponding \emph{operator norm} of a kernel operator is defined as
$$\| \W\|_{\textrm{op.},p}=\sup_{f\in L^p[0,1], \|f\|_p=1} \|\W f\|_p.$$

Let $\I$ denote the set of all subintervals of $[0,1]$. The \emph{interval norm} of $f:[0,1]\to \mathbb{R}$ is defined as
$$\|f\|_\I=\sup_{I\in \I}\left|\int_I f(x)\mathrm{d}x \right|.$$

We extend the $L^p$ and the interval norms to vector-valued functions: for $f:[0,1]\to \mathbb{R}^\S$ where $\S$ is a finite set, we denote the components of $f$ by $f_s,\, s\in\S$, and set
\begin{align*}
\|f\|_p=\sum_{s\in\S} \|f_s\|_p,\qquad \qquad
\|f\|_\I=\sum_{s\in\S} \|f_s\|_\I.
\end{align*}
The norms for vector-valued functions is mostly a technicality, as $S$ will be kept fixed throughout.

The \emph{$L^p$-norm} of a kernel is
$$\| W \|_p=\left(\int_{[0,1]^2} |W(x,y)|^p \d x\d y\right)^{1/p}.$$

On $\mathcal{W}_0$, the following norm relations hold:
\begin{align}
\| W\|_{\square}&= \|W\|_1\leq \|W\|_2\leq \|W\|_1^{1/2}\leq 1,\\
\label{eq:op2boxnormequiv}
\| W\|_{\square}&\leq \|\W\|_{\textrm{op},2}\leq \sqrt{8}\| W\|_{\square}^{1/2}.
\end{align}

\subsubsection*{Spectral properties}

Every graphon kernel operator $\W$ has a discrete spectrum with 0 as the only accumulation point, and every nonzero eigenvalue has finite multiplicity. The nonzero eigenvalues are ordered as $|\lambda_1(\W)|\geq |\lambda_2(\W)|\geq \dots$, with corresponding eigenfunctions $f_k(x),k=1,2,\dots$, normalized by $\|f\|_2=1$. $\lambda_1$ is always positive.

The spectral decomposition of the corresponding kernel $W$ is
$$W(x,y)=\sum_{k=1}^\infty \lambda_k(\W)f_k(x)f_k(y).$$

A graphon $W$ is said to have \emph{finite rank} if the spectrum of $\W$ only has a finite number of nonzero eigenvalues \cite{lovaszbook}. In this case, the spectral decomposition is also finite.

Any graphon kernel can be \emph{truncated} to a finite rank $K$ via
$$W_K(x,y)=\sum_{k=1}^K \lambda_k(\W)f_k(x)f_k(y).$$

The truncated $W_K$ may be negative even if the original $W$ is from $\mathcal{W}_0$.

The norms relate to the spectrum via
$$\| W\|_2^2=\sum_{k=1}^\infty |\lambda_k(\W)|^2 \qquad \textrm{and}\qquad \|\W\|_{\textrm{op},2}=\lambda_1(W).$$

We also have the error bound
$$\| W-W_K\|_2^2=\sum_{k=K+1}^\infty |\lambda_k(\W)|^2$$
for the truncated kernel.

The adjacency matrix $A$ of any simple graph on $N$ vertices is real and symmetric, so its spectrum is also real and will be ordered as \linebreak $|\lambda_1(A)|\geq |\lambda_2(A)|\geq \dots \geq \left |\lambda_N(A) \right |$.

If $A^N$ denotes the sample graph of the graphon $\W$ on $N$ vertices, then the spectrum of $A^N$ is the normalized spectrum of $\W$ \cite{randomgraphspectra}, i.e.
$$\lambda_i(\W)=\lim_{N\to\infty}\lambda_i(A^N)/(\kappa_NN).$$

\subsubsection*{Random graphs and limit graphons}

In the \emph{{\erg}} on $N$ vertices, each pair of vertices $i\neq j$ is connected with probability $0<p<1$ (density parameter), independent from other pairs. We assume $G^N$ is undirected and simple, so $a_{ii}^N=0 $ and $a_{ij}^N=a_{ji}^N$, with $\{a_{ij}:i>j\}$ independent Bernoulli($p$) distribution.

The corresponding random graphon $A^N$ is blockwise constant with value $a_{ij}$ on blocks $x \in \left[\frac{i-1}N,\frac{i}N\right), y\in \left[\frac{j-1}N,\frac{j}N\right),\, i,j=1\dots N$.

As $N\to\infty$, the corresponding graphon $W^N$ converges to the graphon $W(x,y)\equiv p$ in $\|.\|_\square$ and $\|.\|_{\textrm{op.},p}$ but not in $\|.\|_{p}$ (note that $(A^N-p)$ is not in $\mathcal{W}_0$) \cite{gao2016}.

The dominant eigenvalue of $A^N$ is $pN+o(N)$ as $N\to\infty$, while all other eigenvalues are of order $N^{1/2}$, converging to a semicircle law after scaling \cite{ErdosI,ErdosII}.

It will be convenient to consider sequences of graphons where the density changes with $N$. To this end, we introduce a sequence $\kappa_N>0$, and for each $N$, consider the {\erg} on $N$ vertices with edge probability $\kappa_N$.

As $N\to\infty$, for the corresponding graphon $W^N$ we now have that $W_N/\kappa_N$ converges to the graphon $W(x,y)\equiv 1$ in $\|.\|_\square$ and $\|.\|_{\textrm{op.},p}$.

The following lemma enables approximation of the re-scaled empirical graphon operator $\frac{1}{\kappa^N}\W^N $ by the limiting graphon operator $\W$. The proof can be found in \cite{graphonL2conv}.

\begin{lemma}
\label{l:main}
Assume $W$ is blockwise continuous and $\frac{\log N}{\kappa^N N} \to 0$. Then 
\begin{align*}
& \left \| \frac{1}{\kappa^N}\W^N-\W \right \|_{\textrm{op.,}2} \to 0 \st
\end{align*}
where $st.$ denotes stochastic convergence.
\end{lemma}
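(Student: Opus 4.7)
The plan is to split the operator-norm error into a deterministic bias and a random fluctuation via the triangle inequality:
\begin{align*}
\left\| \frac{1}{\kappa^N} \W^N - \W \right\|_{\textrm{op.,}2}
&\leq \left\| \frac{1}{\kappa^N} \W^N - \frac{1}{\kappa^N}\E\!\left[\W^N \mid U^N\right] \right\|_{\textrm{op.,}2}\\
&\quad + \left\| \frac{1}{\kappa^N}\E\!\left[\W^N \mid U^N\right] - \W \right\|_{\textrm{op.,}2},
\end{align*}
where $U^N = (U_1^N, \ldots, U_N^N)$ denotes the sampling points. I would show the bias term tends to $0$ deterministically (a.s.\ if the $U_i^N$ are random) using the blockwise continuity of $W$, and that the fluctuation term tends to $0$ stochastically via a spectral concentration estimate for sparse random matrices.

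For the bias, conditionally on $U^N$, the kernel $\frac{1}{\kappa^N}\E[\W^N\mid U^N]$ is the step kernel taking value $W(U_i^N, U_j^N)$ on $I_i^N \times I_j^N$. Blockwise continuity of $W$ together with the fact that the partition mesh $\max_i |U_i^N - U_{i-1}^N| \to 0$ ensures pointwise a.e.\ convergence of this step kernel to $W$ on $[0,1]^2$; boundedness and dominated convergence upgrade this to convergence in $L^2([0,1]^2)$. The Hilbert--Schmidt bound $\|\W\|_{\textrm{op.,}2} \leq \|W\|_2$ then yields convergence in $\|\cdot\|_{\textrm{op.,}2}$.

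For the fluctuation, I would exploit the block structure of $\W^N$ to reduce to a spectral bound on the underlying matrix. A direct computation shows that for any block kernel $\mathbf{K}$ with values $k_{ij}$ on $I_i^N\times I_j^N$, viewing $K=(k_{ij})$ as an $N\times N$ matrix, one has $\|\mathbf{K}\|_{\textrm{op.,}2} \leq \frac{1}{N}\|K\|_{\textrm{op.}}$, where the right-hand side is the spectral norm on $\ell^2(\{1,\dots,N\})$. Applied to $K = A^N - \E[A^N\mid U^N]$, a symmetric matrix with independent centered entries above the diagonal, bounded by $1$ and with variances at most $\kappa^N$, this gives
\[
\left\| \frac{1}{\kappa^N}\W^N - \frac{1}{\kappa^N}\E[\W^N\mid U^N] \right\|_{\textrm{op.,}2} \leq \frac{1}{\kappa^N N} \left\|A^N - \E[A^N\mid U^N]\right\|_{\textrm{op.}}.
\]
Spectral concentration results for inhomogeneous sparse random matrices (Feige--Ofek or Bandeira--van Handel type) give, under $\frac{\log N}{\kappa^N N}\to 0$, a bound of the form $\|A^N - \E[A^N\mid U^N]\|_{\textrm{op.}} = O(\sqrt{\kappa^N N})$ with high probability; dividing by $\kappa^N N$ yields $O(1/\sqrt{\kappa^N N}) \to 0$.

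The main obstacle is the spectral concentration step in the genuinely sparse regime $\kappa^N \to 0$. A straightforward matrix Bernstein inequality produces a superfluous $\sqrt{\log N}$ factor and would only cover the denser range $\kappa^N N \gg (\log N)^2$. Obtaining the sharper bound $O(\sqrt{\kappa^N N})$ valid under the weaker condition $\kappa^N N \gg \log N$ requires either the Feige--Ofek trimming trick (excluding exceptionally high-degree vertices before estimating the spectrum) or the more recent dimension-free inequalities of Bandeira--van Handel; this is precisely where the hypothesis $\frac{\log N}{\kappa^N N}\to 0$ enters critically.
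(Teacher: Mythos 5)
The paper does not prove Lemma~\ref{l:main} itself --- it cites an external reference \cite{graphonL2conv} for the proof --- so there is no internal argument to compare against. Evaluating your proposal on its own merits: the decomposition into a deterministic bias $\frac{1}{\kappa^N}\E[\W^N\mid U^N]-\W$ and a random fluctuation $\frac{1}{\kappa^N}(\W^N-\E[\W^N\mid U^N])$ is the right approach and is the standard one in the graphon-convergence literature, and every step you outline is essentially correct. A few small points worth flagging.

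For the bias term, be a little careful about what the conditional expectation actually is: since $a_{ii}^N\equiv 0$, the step kernel $\frac{1}{\kappa^N}\E[\W^N\mid U^N]$ equals $W(U_i^N,U_j^N)$ on the off-diagonal blocks but $0$ on the diagonal blocks $I_i^N\times I_i^N$; these have total measure $1/N$, so they contribute $O(N^{-1/2})$ to the $L^2$ norm and vanish, but this should be stated. Also, your convergence argument implicitly uses that $\max_i|U_i^N-i/N|\to 0$ a.s.\ (Glivenko--Cantelli for the order statistics) so that the step kernel's sampling points become dense; combined with blockwise continuity this gives a.e.\ pointwise convergence away from the (finitely many, measure-zero) discontinuity lines, and dominated convergence then upgrades to $L^2$ as you say.

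For the fluctuation term, your reduction $\|\mathbf{K}\|_{\textrm{op.,}2}\leq\frac{1}{N}\|K\|_{\textrm{op.}}$ is correct (it is in fact an equality for block kernels, via the isometry that sends a block kernel to $\frac{1}{N}$ times the matrix acting on local block averages). The matrix $K=A^N-\E[A^N\mid U^N]$ is symmetric with independent, centered, bounded entries above the diagonal and entrywise variances $\leq\kappa^N$, so the Feige--Ofek / Le--Levina--Vershynin / Bandeira--van Handel type bounds give $\|K\|_{\textrm{op.}}=O(\sqrt{\kappa^N N})$ with high probability precisely when $\kappa^N N\gtrsim\log N$. You correctly identify this as the place where the hypothesis $\frac{\log N}{\kappa^N N}\to 0$ enters and where a naive matrix Bernstein bound would lose a $\sqrt{\log N}$ factor. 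The outline is sound; to make it a complete proof one would need to fill in the order-statistics concentration, the diagonal-block correction, and a precise citation and hypothesis check for the sparse-matrix concentration result.
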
 

We remark that  $\|.\|_{op.,2}$ can be replaced by $\|.\|_\square$ in Lemma \ref{l:main} due to \eqref{eq:op2boxnormequiv} in the dense case when $ \kappa^N=1 $.




\section{Setup of the model}
\label{s:setup}

\subsection{Local density-dependent Markov processes on finite graphs}
\label{s:lddmp}

Using the setup of the previous section, assume we have a graphon kernel $W$ and a $\kappa_N$ sequence of densities, and $\{G^N:\,N=1,2,\dots\}$ denotes the (random) sample graphon from $W$ for each $N$.

Then we look to define a local density-dependent Markov process on $G^N$ for each $N$. Assume $N$ is fixed for now. The vertices of $G^N$ are $\{1,\dots,N\}$. $G^N$ itself and the process to be defined on $G^N$ are both random; $\pr_G$ will denote conditional probability with respect to $G^N$ (effectively making the graph $G^N$ fixed).

Each vertex of $G^N$ can be in one of the states from a finite set of states $\S$. The indicator $\xi_{i,s}^N(t)$ where $i \in \{1,\dots,N\}$ and $s\in\S$ denotes that vertex $i$ is in state $s \in \S$ at time $t$. Sometimes it will be convenient to map the collection of indicators according to
\begin{align}
\label{eq:xiN}
\xi^N(t,x):= \sum_{i=1}^N \xi_i^N(t) \1{I_i^N}(x),
\end{align}
where $I_i^N$ is according to \eqref{eq:IiNdef}.

$ \bar{\xi}_{s}^N(t)$ denotes the ratio of vertices in some state $s$ at time $t$. The vector $ \bar{\xi}^N(t)= \left( \bar{\xi}_{s}^N(t) \right)_{s \in S} $ will be referred to as \emph{state density} and can be calculated as 
\begin{align}
\label{eq:statedensityN}
& \bar{\xi}^N(t)= \frac{1}{N}\sum_{i=1}^N \xi_{i}^N(t)= \int_{0}^{1} \xi^N(t,x) \d x
\end{align}
Note that $\bar{\xi}^N(t) $ is always on the simplex
\begin{align*}
& \Delta := \left \{ \left. v \in \mathbb{R}^{\S } \right | \forall s \in \S \ v_s \geq 0, \sum_{s \in \S}v_s=1  \right \},
\end{align*}
and can be interpreted as an empirical probability vector.

The neighborhood of vertex $i$ can be described by
\begin{align*}
& \phi_{i,s}^N(t):= \frac{1}{N \kappa^N} \sum_{j=1}^N a_{ij}^N \xi_{j,s}^N(t),\qquad s\in \S.
\end{align*} 
(The normalization by $\kappa^N$ is global.) We will also use the vector notation $ \phi_i^N(t):= \left( \phi_{i,s}^N(t) \right)_{s \in \S} $ and call it the \emph{environment vector} of vertex $i$.

The scaling is justified by the identity
\begin{align}
\label{phinorm}
& \left \| \phi_i^N(t) \right \|= \sum_{s \in \S} \frac{1}{N \kappa^N} \sum_{j=1}^N a_{ij}^N \xi_{j,s}^N(t)=\frac{1}{N \kappa^N} \sum_{j=1}^N a_{ij}^N= \frac{d^N(i)}{N \kappa^N}.
\end{align}
Since $N \kappa^N $ is of the same order as the average degree, this means that the environment vectors are $O(1)$ for most vertices; however, $\phi_i^N(t)$ is not necessarily in the simplex $ \Delta $.

We can also rewrite the environment vectors using the graphon operator $\W^N$ corresponding to $G^N$ as
\begin{align}
\label{phiW}
& \phi_i^N(t)= \frac{1}{\kappa^N} \W^{N} \xi^N \left(t, \frac{i}{N} \right).
\end{align}

The evolution of the state of each vertex $i$ is according to a continuous time Markov chain with transition rate matrix $Q(\phi_i(t))=(q_{ss'}(\phi_i(t)))_{s,s'\in\S}$, where the transition rates are given $q_{ss'}: \mathbb{R}^{S} \to \mathbb{R} $ functions and the dependence on $\phi_i^N(t)$ corresponds to the \emph{local-density dependence} of the evolution: vertices are influenced only by their corresponding neighborhoods. Note that $q_{ss'}$ denotes the transition rate from state $s'$ to $s$. The diagonal elements are set to \linebreak$q_{ss}:= - \sum_{s' \neq s} q_{s's} $.

The transition rate functions are assumed to be nonnegative, i.e.~for all $ 0 \leq \phi \in \mathbb{R}^{\S}  $ we have $q_{ss'}(\phi)\geq 0 $. For technical reasons the nonnegativity requirement is extended for negative inputs too: we introduce an auxiliary matrix valued function $\hat{Q} $ with elements $\hat{q}_{ss'}:= \left |q_{s's} \right |  $ for $s \neq s' $ and $\hat{q}_{ss}:=-\sum_{s' \neq s}\hat{q}_{s's} $. Note that $ \hat{Q} \big|_{\phi \geq 0}=Q \big|_{\phi \geq 0}. $

For regulatory purposes we assume throughout this paper that the rate functions $q_{ss'}$ are globally Lipschitz continous with some Lipschitz constant $ L_{ss'}$. From the inequality
\begin{align*}
\big| \left|q_{ss'}(\phi) \right |-\left|q_{ss'}(\psi) \right | \big| \leq \left |q_{ss'}(\phi)-q_{ss'}(\psi) \right | \leq L_{ss'} \left \|\phi -\psi \right \|
\end{align*}    
 it is clear that we can use the same Lipschitz constants for $\hat{q}_{ss'}$.

The norm in $ \mathbb{R}^{S} $ induces a norm on $\mathbb{R}^{\mathcal{S} \times \mathcal{S}} $ matrices which we will denote by $ \left \|Q \right \|:= \sup \left \{ \left \|Qv \right \| : v \in \mathbb{R}^{\S}, \| v \|=1 \right \} $. $Q$ inherits its Lipschitz continuity from its components with some constant $L_Q$. Note that $ \hat{Q} $ and $Q$ may be assumed to have the same Lipschitz constant. The notation $Q_{\max}$ will refer to $ Q_{\max}:=\max_{ \phi \in \Delta } \left \|Q(\phi) \right \| $.

\subsection{The PDE limit}\label{s:PDE_limit}

Consider the following partial differential equation: 
\begin{align}
\label{eq:PDE}
& \partial_t u(t,x)= Q\left(\W u(t,x) \right)u(t,x).
\end{align}
In Section \ref{s:wellposedness} we show that \eqref{eq:PDE} has a unique global solution and that $u(t,x) \in \Delta $ provided $u(0,x) \in \Delta $ and $u(0, \cdot) $ is continuous.

The main result of the paper is that the solution of \eqref{eq:PDE} is the mean-field limit of $\xi^N(t,x)$ as $N\to\infty$:
\begin{theorem}
	\label{t:main}
	
	Let $W$ and $u(0)$ be continuous. Also assume \linebreak$ \left \| \xi^N(0)-u(0) \right \|_{\I} \to 0 \st $ and $ \frac{\log N}{\kappa^N N} \to 0. $ Then for any $T>0$,
	\begin{align}
	\label{eq:thm1}
	& \lim_{N \to \infty} \sup_{0 \leq t \leq T} \left\| \xi^N(t)-u(t) \right \|_{\I}=0 \st
	\end{align}
where $st.$ denotes stochastic convergence.	
\end{theorem}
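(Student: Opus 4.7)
The plan is to carry out a Kurtz-type mean-field argument in the weak interval norm, with the random graph operator $\frac1{\kappa^N}\W^N$ replaced by $\W$ via Lemma~\ref{l:main}. First, Dynkin's formula applied to each vertex-indicator gives
\[
\xi_i^N(t)=\xi_i^N(0)+\int_0^t Q(\phi_i^N(s))\xi_i^N(s)\,\d s+M_i^N(t),
\]
where the $M_i^N$ are orthogonal c\`adl\`ag martingales (because no two vertices jump simultaneously) with $\langle M_{i,s}^N\rangle_T\le Q_{\max}T$. Aggregating via \eqref{eq:xiN} and subtracting the Duhamel form of \eqref{eq:PDE} produces, for $e^N(t):=\xi^N(t)-u(t)$, an integral inequality whose drift I split as
\[
Q(\phi^N)\xi^N-Q(\W u)u \;=\; Q(\W u)\,(\xi^N-u)\;+\;[Q(\phi^N)-Q(\W u)]\,\xi^N.
\]

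For the martingale, since $M^N(t,\cdot)$ is constant on each block $I_i^N$, a telescoping step reduces the interval norm to partial sums:
\[
\|M^N(t)\|_\I\;\le\;\tfrac{2}{N}\max_{1\le k\le N}\Big\|\textstyle\sum_{i=1}^k M_i^N(t)\Big\|+O(N^{-1}).
\]
Orthogonality of the $M_i^N$ gives $\langle\sum_{i=1}^kM_i^N\rangle_T\le Ck$, and a Burkholder--Davis--Gundy estimate at $L^4$ yields $\E[\sup_{t\le T}\|\sum_{i=1}^k M_i^N(t)\|^4]=O(k^2)$. A union bound over $k\le N$ combined with Markov's inequality then gives $\sup_{t\le T}\|M^N(t)\|_\I\to 0$ stochastically; the $L^4$ moment (rather than $L^2$) is needed for the union bound to be summable.

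For the drift, the graph-error term $[Q(\phi^N)-Q(\W u)]\xi^N$ is controlled pointwise by $L_Q\|\phi^N-\W u\|$, and I split $\phi^N-\W u=\big(\tfrac{1}{\kappa^N}\W^N-\W\big)\xi^N+\W(\xi^N-u)$: the first summand vanishes via Lemma~\ref{l:main}, using $\|\xi^N\|_2\le 1$ and Cauchy--Schwarz to pass from $\|\cdot\|_{\mathrm{op},2}$ to an $L^1$-bound, while the second is handled through a ``smoothing'' estimate $\|\W g\|_\infty\le K(\eta)\|g\|_\I+\eta\|g\|_1$ obtained by approximating the continuous $W(x,\cdot)$ with a $K(\eta)$-step function in $y$. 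The principal term $Q(\W u)\,e^N$ is pointwise multiplication of $e^N$ by a matrix-valued function of $x$, which, thanks to the regularity of $u$ proved in Section~\ref{s:wellposedness}, is continuous in $x$ uniformly in $s\in[0,T]$; approximating this multiplier by a function of bounded variation and using the elementary bound $\|gf\|_\I\le(\|g\|_\infty+\mathrm{TV}(g))\|f\|_\I$ gives $\|Q(\W u)e^N\|_\I\le C(\eta)\|e^N\|_\I+O(\eta)$. Substituting into the Duhamel difference and applying Gronwall,
\[
\sup_{t\le T}\|e^N(t)\|_\I\;\le\;\Big(\|e^N(0)\|_\I+\sup_{t\le T}\|M^N(t)\|_\I+T\eta+o_{\pr}(1)\Big)\,e^{C(\eta)T},
\]
after which one sends $N\to\infty$ first and $\eta\to 0$ second.

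The main obstacle will be closing the Gronwall loop cleanly in the weak norm $\|\cdot\|_\I$: the multiplier $Q(\W u)$ can amplify the interval norm by a constant that grows as its BV-approximation is refined, so $C(\eta)$ need not stay bounded, and the regularity of $u$ from Section~\ref{s:wellposedness} together with the quantitative rate in Lemma~\ref{l:main} must be good enough that the martingale and graph errors are negligible before the $e^{C(\eta)T}$ factor is brought out. A secondary but important technicality is the use of BDG at $L^4$ rather than $L^2$ for the martingale, which is what makes the partial-sum union bound summable in $N$.
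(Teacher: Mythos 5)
Your plan is structurally different from the paper's (which discretizes $[0,1]$ into $M$ boxes, proves convergence of the box-averaged process to an $SM$-dimensional ODE system in $L^1$ with $M$ fixed, and only then sends $M\to\infty$), and the difference is not merely stylistic: the step you flag as "the main obstacle" is in fact a genuine gap that your approach, as proposed, cannot close under the theorem's hypothesis that $W$ is merely \emph{continuous}.

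Concretely, your Gronwall variable is $\|e^N(t)\|_\I$, and the drift contributes two terms that do not contract in the interval norm. For the principal term $Q(\W u)\,e^N$, the bound $\|gf\|_\I\le(\|g\|_\infty+\mathrm{TV}(g))\|f\|_\I$ requires $g=Q(\W u(t,\cdot))$ to be of bounded variation, but the regularity of $\W u(t,\cdot)$ in $x$ is exactly the modulus of continuity of $W(x,\cdot)$ in $x$, so for continuous (non-Lipschitz) $W$ the multiplier need not be BV. Approximating it by a step function with $K$ pieces gives $\mathrm{TV}\approx K\,\omega_W(1/K)$ and uniform error $\eta\approx\omega_W(1/K)$, so $C(\eta)=\eta/\omega_W^{-1}(\eta)$, which blows up as $\eta\to 0$ unless $\omega_W(\delta)=O(\delta)$. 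The same happens in your smoothing estimate $\|\W g\|_\infty\le K(\eta)\|g\|_\I+\eta\|g\|_1$: integration by parts closes this with a fixed constant only when $W(x,\cdot)$ is uniformly BV in $y$. The final Gronwall bound therefore contains a factor $\eta\,e^{C(\eta)T}$ that does not tend to $0$ as $\eta\to 0$; sending $N\to\infty$ first does not help, because $C(\eta)$ enters multiplicatively against the \emph{deterministic} remainder $T\eta$, not only against the vanishing stochastic errors. In short, your scheme would go through if $W$ were assumed Lipschitz (then $Q(\W u(t,\cdot))$ is Lipschitz in $x$, $\mathrm{TV}$ is fixed, and $\|\W g\|_\infty\le(\|W\|_\infty+L_W)\|g\|_\I$ cleanly, so $C$ is bounded), but not under the stated assumption.

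The paper circumvents exactly this by not doing Gronwall in $\|\cdot\|_\I$ at all. It fixes a box count $M$, compares the box-averaged process $\xi^{N,M}$ to the discretized ODE $v^M$ in $L^1$ (Lemma~\ref{l:hard}), where the drift is a pointwise multiplication by a matrix that stays in a compact set, so the Gronwall constant is $L=2L_Q+Q_{\max}$, \emph{independent of $M$ and of any $\eta$}. It then compares $v^M$ to the PDE solution $u$ in $L^1$ (Lemma~\ref{v^M approx u}), again with an $M$-independent Gronwall constant, incurring an error $O(\delta_M)$. Only at the very end is the $L^1$ error on box functions converted into an $\|\cdot\|_\I$ error, paying an extra $O(1/M)$ for boundary intervals. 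The two-scale limit ($N\to\infty$ with $M$ fixed, then $M\to\infty$) is what keeps all exponentials uniformly bounded. Your martingale analysis (orthogonality, BDG at $L^4$, union bound over partial sums) is a reasonable and somewhat more hands-on counterpart of the paper's Lemma~\ref{l:fluctuation}, although you would still need the Lemma-\ref{l:fluctuation}-style argument to control $\langle M_i^N\rangle_T$ for high-degree vertices before invoking BDG. But without restructuring the drift estimates around a discretization (or strengthening the hypothesis on $W$ to Lipschitz and proving the corresponding $\mathrm{BV}$/smoothing bounds), the Gronwall step does not close, and the proof is incomplete as stated.
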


The norm $\|.\|_\I$ is used for technical reasons. It ensures convergence of macroscopic quantities of interest: any Borel measurable $B \subset [0,1] $ can be approximated with union of intervals, so
\begin{align*}
\lim_{N \to \infty}\sup_{0 \leq t \leq T} \left \| \int_{B} \xi^N(t,x)-u(t,x) \d x \right \| = 0 \st,
\end{align*}
and so for any continuous and bounded function $ \varphi:[0,1]\to\mathbb{R}$ we have
\begin{align*}
\lim_{N \to \infty}\max_{s\in\S} \sup_{0 \leq t \leq T} \left | \int_{0}^{1} \varphi(x) \left [ \xi_s^N(t,x)-u_s(t,x) \right]  \d x\right |= 0 \st
\end{align*}

As a special case of \eqref{eq:thm1} for the interval $[0,1]$, the state density $\bar{\xi}^N(t)$ can be approximated by $ \bar{u}(t)$ uniformly in $[0,T]$.

We also mention that at the expense of more cumbersome notation, one can generalize this result to the case when $W$ and $u(0)$ is only piecewise continuous. In that case one can approximate stochastic processes on stochastic block models too with the PDE which reduces to a system of ODEs.

We also remark that while Section \ref{s:lddmp} sets up $G^N$ as a sample sequence from the limit graphon $W$, this is not the only interpretation; in fact, any sequence of graphons works that satisfies the convergence criterion of Lemma \ref{l:main}.

While the use of $\kappa^N$ enables relaxing the density constraint, it does not carry over to truly sparse graphs (i.e.~when $N \kappa^N =O(1)$). We state the counterexample as a separate theorem.

\begin{theorem}
	\label{t:r=1}	
		
	Consider the sparse {\erg} with parameter $\lambda>0$, that is, $\kappa^N=\frac{\lambda}{N}$ and $W(x,y) \equiv 1$. For the SIS process, there exists a sequence of initial conditions $ u_I(0,x) \equiv \bar{u}_I(0)= e^{-\lambda} $ such that
	$$ \left \|\xi^N(0)-\bar{u}_I(0) \right \|_\I\to 0\quad\textrm{ as }N\to\infty \textrm{ st.},$$ while for any $T>0$ 
	\begin{align*}
	&  \sup_{0 \leq t \leq T} \left | \bar{\xi}^N_I(t)-\bar{u}_I(0)e^{-t} \right |  \to 0 \st
	\end{align*}
	and
	\begin{align*}
	& \sup_{0 \leq t \leq T} \left |\bar{u}_I(0)e^{-t}-\bar{u}_I(t) \right |>0.
	\end{align*}
	where $st.$ denotes stochastic convergence.
\end{theorem}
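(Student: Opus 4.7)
The plan is to exploit the abundance of isolated vertices in the sparse \erg\ with $\kappa^N=\lambda/N$. Each vertex $i$ is isolated ($d^N(i)=0$) with probability $(1-\lambda/N)^{N-1}\to e^{-\lambda}$, and I choose the sequence of initial conditions by setting $\xi_{i,I}^N(0):=\1{d^N(i)=0}$, so that the initially infected set is exactly the set of isolated vertices. The structural observation driving everything is that \emph{no infection event can ever occur under this initial condition}: an infected vertex is by construction isolated and so has no neighbor to infect; a susceptible vertex has only non-isolated neighbors, which are all susceptible at $t=0$ and remain so, since no one ever becomes infected. Thus the process of infected vertices reduces to a pure death chain with recovery rate $1$.

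To establish $\left\|\xi^N(0)-e^{-\lambda}\right\|_\I\to 0\st$, I would use a second-moment argument on the indicators $X_i^N:=\1{d^N(i)=0}$. A direct calculation gives $\cov(X_i^N,X_j^N)=\frac{\lambda}{N}(1-\lambda/N)^{2N-3}=O(1/N)$, so $\mathrm{Var}\!\left(\sum_{i\leq k}X_i^N\right)=O(N)$ uniformly in $k$. Combined with a Glivenko--Cantelli-type uniformization over the endpoints of subintervals (a $\sqrt{N}$-scale discretization of $[0,1]$ plus monotonicity of the cumulative sums $k\mapsto \sum_{i\leq k}X_i^N$), this delivers stochastic convergence of $\xi^N(0)$ to the constant $e^{-\lambda}$ in interval norm.

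For the dynamics, let $K^N(t):=\sum_i \xi_{i,I}^N(t)$. By the structural observation, $K^N$ is a pure death chain starting from $K^N(0)\approx Ne^{-\lambda}$, so $\E[K^N(t)\mid K^N(0)]=K^N(0)e^{-t}$ with variance $O(N)$, and a routine functional law of large numbers yields
\begin{align*}
\sup_{0\leq t\leq T}\left|\bar\xi_I^N(t)-e^{-\lambda}e^{-t}\right|\to 0 \st
\end{align*}
On the PDE side, with $W\equiv 1$ and spatially constant initial condition $\bar u_I(0)=e^{-\lambda}$, symmetry reduces \eqref{eq:PDE} to the ODE $\dot{\bar u}_I=-\bar u_I+\beta(1-\bar u_I)\bar u_I$ (where $\beta>0$ is the SIS infection parameter), whose solution satisfies $\bar u_I(t)>e^{-\lambda}e^{-t}$ for every $t>0$: the difference $g:=\bar u_I-e^{-\lambda}e^{-t}$ vanishes at $0$ and satisfies $\dot g = -g+\beta(1-\bar u_I)\bar u_I$, whose extra term is strictly positive on $(0,1)$, ruling out a return to zero. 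This yields the strict positivity in the final display of the theorem.

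The main obstacle is the uniform-in-intervals concentration for the initial condition: the indicators $X_i^N$ are exchangeable but not independent, so the Dvoretzky--Kiefer--Wolfowitz inequality cannot be applied directly, and a discretization/chaining argument leveraging the $O(1/N)$ covariance bound has to take its place. A subsidiary but essential check is that susceptible vertices really do receive zero infection pressure throughout: since $\phi_i^N(t)$ depends only on $i$'s one-step neighborhood and every neighbor of a non-isolated vertex is itself non-isolated (hence susceptible for all $t\geq 0$), the infection rate stays identically zero for every susceptible vertex.
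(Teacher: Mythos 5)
Your proposal is correct and follows essentially the same approach as the paper: you use the identical initial condition $\xi_{i,I}^N(0)=\1{d^N(i)=0}$, the same structural observation that no infection can ever occur (so $\bar\xi_I^N$ is a pure death process), a second-moment concentration argument with the same $O(\lambda/N)$ covariance bound for the interval-norm convergence of the initial data, a law of large numbers plus monotonicity to upgrade to uniform-in-$t$ convergence to $e^{-\lambda}e^{-t}$, and a comparison of the ODE $\dot{\bar u}_I=-\bar u_I+\beta(1-\bar u_I)\bar u_I$ against pure decay. The only cosmetic differences are that the paper handles the interval norm via the fixed-$M$ box discretization already set up in Section 5 rather than a $\sqrt N$-scale chaining argument, and for the final display it only compares derivatives at $t=0$ ($\dot\varphi(0)=-\bar u_I(0)\neq -\bar u_I(0)+\beta\bar u_I(0)(1-\bar u_I(0))=\dot{\bar u}_I(0)$), whereas your argument via $g:=\bar u_I-e^{-\lambda}e^{-t}$ shows the slightly stronger fact that $\bar u_I(t)>e^{-\lambda}e^{-t}$ for all $t>0$.
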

The proofs of Theorems \ref{t:main} and \ref{t:r=1} are discussed in Section \ref{s:convproof}, but here we present a heuristic argument for Theorem \ref{t:main}.

Let
$$u^N_{i,s}(t):=\pr_G \left( \xi_{i,s}^N(t)=1\right)= \E_G \left( \xi_{i,s}^N(t) \right)$$
where the subscript $G$ refers to conditional probability and expectation with respect to the graph $G^N$. We will also use the notation
\begin{align*}
u_i^N(t)&:= \left(u_{i,s}^N(t)\right)_{s \in \S},\\
u^N(t,x)&:= \sum_{i=1}^N u_i^N(t) \1{I_i^N}(x),
\end{align*}
analogous to \eqref{eq:xiN}.

From the law of total expectation for the Markovian transitions, we have
\begin{align}
\label{totalexpectation}
& \frac{\d}{\d t}u_i^N(t)= \E_G \left[ Q \left( \frac{1}{N\kappa^N}\sum_{j=1}^N a_{ij}^N \xi_j^N(t) \right)\xi_j^N(t) \right].
\end{align}
When $N\kappa^N$ is large, each vertex has many neighbors, and in the average $\frac{1}{N\kappa^N}\sum_{j=1}^N a_{ij}^N \xi_j^N(t)$, the fluctuations due to the Markov process are low, so we may approximate it with its expectation $ \frac{1}{N\kappa^N}\sum_{j=1}^N a_{ij}^N u_j^N(t) $.

We also approximate the edges $a_{ij}^N$ with their expectation $\kappa^N W\left( \frac{i}{N}, \frac{j}{N}\right) $. This is sometimes called the annealed graph approximation \cite{annealed}. By setting $x= \frac{i}{N} $, this results in 
\begin{align*}
& \frac{1}{N\kappa^N}\sum_{j=1}^N a_{ij}^N u_j^N(t)  \approx \frac{1}{N}\sum_{j=1}^N  W\left( \frac{i}{N}, \frac{j}{N}\right) u_j^N(t) \approx  \\
& \int_{0}^{1}W(x,y)u^N(t,y) \d y= \W u^N(t,x),
\end{align*}
and putting these approximations back to \eqref{totalexpectation} yields
\begin{align*}
&\partial_{t}u^N(t,x) \approx Q\left( \W u^N(t,x)  \right)u^N(t,x),
\end{align*} 
suggesting $u^N(t,x)$ is close to $u(t,x)$ for large $N$.

The special case of the complete graph can be obtained by choosing $W(x,y) \equiv 1 $  for which, using the notation
$$\bar{u}(t):= \int_{0}^{1} u(t,x) \d x,$$
\eqref{eq:PDE} simplifies to
\begin{align*}
& \frac{\d}{\d t}\bar{u}(t)=Q \left( \bar{u}(t)\right)\bar{u}(t),
\end{align*}
which is indeed the mean-field limit equation identified by Kurtz \cite{kurtz70}.

Another special case is the SIS process, a simple model of epidemic spreading.  The state space is $ \S=\left \{S,I \right \}$ where $S$ stands for susceptible and $I$ for infected.

The dynamic is as follows: each infected vertex becomes susceptible (recovers) with rate $1$, while each susceptible vertex $i$ becomes infected with rate proportional to the number of its infected neighbors, that is, with rate $\beta \phi_{i,I}^N(t)$. The recovery rate $1$ is not restrictive, and can be achieved with an appropriate re-scaling of time.

The transition matrix can be written as
\begin{align*}
Q\left(\phi_i^N(t)\right)=\left[ {\begin{array}{cc}
	-\beta \phi_{i,I}^N(t) & 1 \\
	 \beta \phi_{i,I}^N(t) &   -1 \\
	\end{array} } \right].
\end{align*}

Since there are just two states, it is enough to consider quantities regarding the infected vertices only.

For state $I$ in the SIS process, \eqref{eq:PDE} becomes
\begin{align}
\label{eq:PDESIS}
& \partial_t u_I(t,x)=-u_I(t,x)+ \beta \left(1-u_I(t,x) \right)\W u_I(t,x).
\end{align}
A linearized version of \eqref{eq:PDESIS} has been studied in \cite{graphonSISnoise} with additional noise term and in \cite{graphonSIScontrol} with control.

\section{Properties of the PDE}
\label{s:pde}

In this section, we examine the partial differential equation \eqref{eq:PDE}. Section \ref{s:wellposedness} contains the proof for the existence and some properties of its solution. Then results concerning the dynamical structure of the SIS model \eqref{eq:PDESIS} are stated in Section \ref{s:SIS}. Section \ref{s:ODEdiscr} provides an approximation of \eqref{eq:PDE} using a discretisation of the operator $\W$.

\subsection{Well-posedness and positivity}
\label{s:wellposedness}

In this section we prove the existence of the solutions of equation \eqref{eq:PDE} and their biologically reasonable behavior, i.e. they do not leave the simplex $\Delta$. 

Let us use the notation $C[0,1]$ for continuous functions mapping from $[0,1]$ to $\mathbb{R}^{\S}$, and let us equip it with the usual uniform norm $\| . \|_{max}$. Then equation \eqref{eq:PDE} can be thought of as a differential equation defined on the Hilbert space $C[0,1]$.

\begin{theorem}\label{t:exist}
Under the above assumptions, equation \eqref{eq:PDE} has a unique solution, for which $u(t,\cdot) \in C[0,1]$. Also, if $u(0,x) \in \Delta$, then $u(t,x) \in \Delta$ for all \linebreak$t \in [0, \infty)$. 
\end{theorem}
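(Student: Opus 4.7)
The plan is to view \eqref{eq:PDE} as an autonomous ODE on the Banach space $(C[0,1],\|\cdot\|_{max})$ with right-hand side
\begin{align*}
F(u)(x):=\hat Q(\W u(x))\,u(x),
\end{align*}
and then combine a Picard--Lindel\"of argument for existence/uniqueness with a transition-matrix trick for invariance of $\Delta$. Replacing $Q$ by $\hat Q$ is the key bookkeeping device that will let me avoid a circular argument on nonnegativity.

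First I would verify that $F$ maps $C[0,1]$ into itself and is locally Lipschitz. Continuity of $\W u$ in $x$ follows from continuity of $W$ together with dominated convergence; composition with the continuous $\hat Q$ and multiplication by $u$ then gives $F(u)\in C[0,1]$. For the Lipschitz estimate I would split
\begin{align*}
F(u)-F(v)=\hat Q(\W u)[u-v]+[\hat Q(\W u)-\hat Q(\W v)]\,v
\end{align*}
and use $\|\W u\|_{max}\le\|u\|_{max}$ (because $W\in\mathcal{W}_0$), the Lipschitz constant $L_Q$ of $\hat Q$, and a uniform bound on $\|\hat Q(\phi)\|$ for $\|\phi\|\le R$, to obtain $\|F(u)-F(v)\|_{max}\le C(R)\|u-v\|_{max}$ whenever $\|u\|_{max},\|v\|_{max}\le R$. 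Standard Picard--Lindel\"of on a Banach space then produces a unique maximal continuous solution on some interval $[0,T^*)$.

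The main obstacle is showing $u(t,\cdot)\in\Delta$: the equation is nonlinear, so simplex-preservation is not direct. I would handle it by freezing the nonlinearity along the already-constructed solution. Fix $x\in[0,1]$ and set $B_x(t):=\hat Q(\W u(t,x))$; then $t\mapsto u(t,x)$ solves the \emph{linear} nonautonomous ODE $\partial_t v=B_x(t)v$ in $\mathbb{R}^{\S}$ with $v(0)=u(0,x)$. By the very definition of $\hat Q$, its off-diagonal entries are nonnegative and its columns sum to zero for every argument, so $B_x(t)$ is a genuine Kolmogorov rate matrix for every $t$. Hence the fundamental matrix $P_x(t)$ of $\partial_t P=B_x(t)P$, $P_x(0)=I$, is stochastic (a transition matrix of an inhomogeneous Markov chain on $\S$), and $u(t,x)=P_x(t)\,u(0,x)\in\Delta$ whenever $u(0,x)\in\Delta$. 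Crucially this is not circular, because $\hat Q$ is a valid rate matrix on \emph{all} of $\mathbb{R}^{\S}$, not only on the nonnegative cone, which is exactly why $\hat Q$ was introduced.

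Once invariance is in hand, the bound $\|u(t,\cdot)\|_{max}\le 1$ provides the a-priori control needed to push $T^*=\infty$. Finally, $u(t,x)\in\Delta$ forces $\W u(t,x)\ge 0$ componentwise, so $\hat Q(\W u(t,x))=Q(\W u(t,x))$ along the solution; this identifies the modified equation with the original \eqref{eq:PDE} along $u$, and the Picard--Lindel\"of uniqueness for the original $F$ (which is locally Lipschitz by the same estimate as above) transfers uniqueness back to \eqref{eq:PDE}.
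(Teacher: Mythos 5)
Your proposal is correct and follows essentially the same route as the paper: replace $Q$ by $\hat Q$, get local existence from a Lipschitz/Picard argument on $C[0,1]$, prove invariance of $\Delta$ by observing that $t\mapsto u(t,x)$ solves the linear Kolmogorov equation with rate matrix $\hat Q(\W u(t,x))$ (the paper phrases this as constructing $p_x(t)$ and using Gronwall to show $p_x=u(\cdot,x)$, which is just a hands-on proof of the same uniqueness fact you invoke), then bootstrap the a priori bound to global existence and use $\hat Q|_{\phi\ge 0}=Q|_{\phi\ge 0}$ to return to the original equation. No substantive difference.
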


\begin{proof}
For technical reasons, first we modify equation $\eqref{eq:PDE} $ by replacing $Q $ with $ \hat{Q} $.  Since the operator $\hat{Q}$ has the Lipschitz property, and  operator $\W$ is bounded, it can be shown that the right-hand side of \eqref{eq:PDE} has also the Lipschitz property. Then, by the usual arguments (see e.g. \cite[Theorem 1.1 in Section 7.1.]{daleckiibook} or \cite[Point 107. in Chapter 5]{volterrabook}) a local solution of  equation \eqref{eq:PDE} exists.

Now we prove that our solution stays in the set $\Delta$. Let us suppose that for a given $x \in [0,1]$ and for a time $t_0$, the local solution starting from $u(0,x)$ exists for $t \in [0,t^*]$ (the previous part of the proof assures its existence).

Let $t \in [0, t^*]$. Now we define an auxiliary inhomogeneous  Markov jump process indexed by $x \in [0,1] $, with transition rates $\hat{Q}(\W u(t,x)) $. The process is well defined for $t \in [0,t^*] $ since $u(t,x) $ exists (and bounded) and the rates $\hat{q}_{ss'}(\W u(t,x))$ are guarantied to be non-negative for $s \neq s'$ even if $u(t,x) $ might have a negative component. This might not be the case for $Q,$ hence the need for $\hat{Q}. $ The corresponding Kolmogorov equation is
\begin{align}\label{kolm}
p_x'(t) = \hat{Q} \left( \W u(t,x) \right) p_x(t).
\end{align}

It is well known that the solutions of \eqref{kolm} are in the set $\Delta$ since $ p_{x}(t)$ is a probability vector, so our goal now is to prove that the functions $p_x(t)$ and $u(t,x)$ are the same. This fact is the result of the following calculations.
\begin{align*}
\| p_x(t) & - u(t,x) \|  \leq \\
\| p_x(0) - & u(0,x) \|  + \int_{0}^{t} \| \hat{Q}(\W u(\tau,x)) p(\tau) - \hat{Q}(\W u(\tau,x)) u(\tau,x)  \| \d \tau \leq \\
  \| p_x(0) & -  u(0,x) \|  + \int_{0}^{t} \| \hat{Q}(\W u(\tau,x))\| \; \| p(\tau) - u(\tau,x)  \| \d \tau \leq 
\end{align*}
Now we use the fact that the function $u(\tau,x)$ is continuous and defined on a compact interval, so it is bounded. Also, since $\W$ and $\hat{Q}$ are bounded operators, the term $\| \hat{Q}(\W u(\tau,x))\| $ is bounded from above by a constant $K$.
\begin{align*}
\leq \| p_x(0) -  u(0,x) \|  + K \int_{0}^{t}  \| p(\tau) - u(\tau,x)  \| \d \tau .
\end{align*}
Then, by using the Gronwall inequality, we get
\begin{align*}
\| p_x(t) - u(t,x) \| & \leq  \| p_x(0) -  u(0,x) \|  e^{Kt} . 
\end{align*}
Since $p_x(0) = u(0,x)$, we get that $p_x(t) = u(t,x)$, so our solution stays in the set $\Delta$.

Then, since our solution is bounded, we can use the usual arguments (see e.g. the proof of \cite[Proposition 2.9. (ii)]{SISdyn}) to prove that our solution is also global.

The last step is to notice $ \left. \hat{Q} \right |_{\phi \geq 0}=\left. Q \right |_{\phi \geq 0}, $ thus, $u(t,x) \geq 0 $ implies $u(t,x) $ also satisfies the original \eqref{eq:PDE} without the hat.
\end{proof}

\subsection{Results for the SIS dynamics}
\label{s:SIS}

Other than the invariance of the set $\Delta$, another important dynamical feature of our model is the asymptotic behavior of the solution. In this section we are only considering the special case of equation \eqref{eq:PDE}, namely the SIS process introduced in Section \ref{s:PDE_limit} which involves the examination of the equation \eqref{eq:PDESIS}.

In their work Diekmann, Heesterbeek and Metz \cite{R0def} examined the dynamical behavior by defining a next generation operator as
\begin{align}\label{NGO}
T(g)(x)=\beta \int_0^1 W(x,y) g(y) \d y. 
\end{align}
In \cite{SISdyn}, Delmas, Dronnier and Zitt considered the spectral radius of an operator similar to \eqref{NGO}, and proved the following theorem \cite[Theorem 1.5. parts (ii)-(iii)]{SISdyn}.

\begin{theorem}
\label{t:threshold}
Let us denote the spectral radius of operator \eqref{NGO} by $R_0$. Assume the connectivity property
\begin{align*}
 \int_{A \times A^{c}}W(x,y)\d x \d x >0
\end{align*}
for all $A $ such that $\mu(A)>0 $ and $\mu\left(A^c \right)>0 $.
\begin{itemize}
\item If $R_0 \leq 1$, then the disease dies out: for all $x \in [0,1]$,
\begin{align*}
\lim_{t \rightarrow \infty} u_I (t,x)=0.
\end{align*}

\item If $R_0 > 1$, then there exists a unique equilibrium $g^* : [0,1] \rightarrow [0,1]$ with nonzero integral. For all initial conditions $u_I(0,x)$ such that its integral is positive:
\begin{align*}
\int_0^1 u_I(0,x) \d x >0,
\end{align*}
the solution $u_I$ converges pointwise to $g^*$, i.e., for all $x \in [0,1]$: 
\begin{align*}
\lim_{t \rightarrow \infty} u_I (t,x)= g^*(x)
\end{align*}
If $u_I(0,x) =0$ almost everywhere, then the solution $u_I$ converges pointwise to $0$.
\end{itemize}
\end{theorem}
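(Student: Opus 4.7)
The plan is to analyze the semilinear PDE \eqref{eq:PDESIS} as a monotone semiflow on the positive cone of $C[0,1]$, splitting the dichotomy at the spectral threshold $R_0 = \beta \lambda_1(\W)$. The key structural observation is that the vector field $F(u) := -u + \beta(1-u)\W u$ is quasi-monotone on $[0,1]$-valued functions: the off-diagonal Fr\'echet derivative $\partial F(u)/\partial u(y) = \beta(1-u(x))W(x,y)$ is nonnegative. Combined with the invariance of the simplex established in Theorem~\ref{t:exist}, this yields a comparison principle: $u_I^{(1)}(0) \leq u_I^{(2)}(0)$ pointwise implies $u_I^{(1)}(t) \leq u_I^{(2)}(t)$ for all $t \geq 0$.

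For the subcritical regime $R_0 < 1$, I would use the linear upper bound $\partial_t u_I \leq -u_I + \beta \W u_I$, valid because $(1-u_I) \leq 1$ on the invariant simplex. Solving the majorant and invoking the self-adjoint spectral theorem (recall $W$ is symmetric) gives $\|u_I(t)\|_2 \leq e^{(R_0-1)t}\|u_I(0)\|_2 \to 0$. Pointwise decay follows from a second pass: $(\W u_I)(t,x) \to 0$ uniformly in $x$ by Cauchy--Schwarz, hence $\partial_t u_I(t,x) \leq -u_I(t,x) + o(1)$. The critical case $R_0 = 1$ requires more care: let $f_1 > 0$ be the Perron eigenfunction of $\W$ (existence via Krein--Rutman, using compactness of $\W$ and irreducibility from the connectivity hypothesis), and set $\alpha(t) := \int_0^1 u_I(t,x) f_1(x)\, \d x$. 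Using symmetry of $\W$ and $\beta \W f_1 = f_1$,
\begin{align*}
\alpha'(t) = -\beta \int_0^1 u_I(t,x) f_1(x) (\W u_I)(t,x)\, \d x \leq 0,
\end{align*}
and LaSalle's invariance principle combined with irreducibility forces $u_I(t) \to 0$.

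For the supercritical regime $R_0 > 1$, I would first construct the nonzero equilibrium $g^*$ as a fixed point of $G(g) := \beta \W g / (1 + \beta \W g)$, a monotone, concave operator mapping $[0,1]$-valued functions to themselves. For sufficiently small $\epsilon > 0$, $G(\epsilon f_1) \geq \epsilon f_1$ because $\beta \W(\epsilon f_1) = R_0 \epsilon f_1 > \epsilon f_1$ and $G$ is close to its linearization at $0$. Iterating $G$ from $\epsilon f_1$ produces a monotone increasing sequence bounded by $1$ that converges to some $g^* > 0$ solving $g^* = G(g^*)$; uniqueness of the strictly positive fixed point follows from strict concavity of $G$ along rays. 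For convergence of general initial data with $\int_0^1 u_I(0,x)\, \d x > 0$, I would combine the monotone semiflow with a sandwich: for some small $\epsilon > 0$ and some $t_0 > 0$, $u_I(t_0) \geq \epsilon f_1$ (the PDE instantly spreads positivity via $\W$, again by irreducibility), while $u_I(t_0) \leq \mathbf{1}$ trivially. The trajectories from $\epsilon f_1$ and $\mathbf{1}$ are monotone and converge to equilibria, necessarily $g^*$ by uniqueness, squeezing $u_I(t)$ to $g^*$.

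The main obstacle is the critical case $R_0 = 1$, where the linearization is only neutrally stable and one must genuinely exploit the nonlinear correction $-\beta u_I \W u_I$; the LaSalle argument hinges on irreducibility of $\W$ being strong enough to preclude invariant configurations supported on proper measurable subsets. A secondary technical point is verifying the instantaneous spreading of positivity used in the supercritical case, which reduces to showing that the connectivity hypothesis $\int_{A \times A^c} W > 0$ is equivalent to irreducibility of $\W$ on $L^2[0,1]$.
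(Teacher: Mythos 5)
The paper does not actually prove this theorem: it is quoted verbatim as Theorem 1.5, parts (ii)--(iii), of Delmas, Dronnier and Zitt \cite{SISdyn}, and the paper simply cites that reference. So there is no in-paper proof to compare your attempt against; your sketch is an independent argument for a result the authors treat as imported.

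As an independent sketch, your strategy is the natural one for a cooperative (quasi-monotone) reaction term and is in the same spirit as the analysis in the cited source: the off-diagonal nonnegativity of $D F(u)$ gives a comparison principle on the invariant order interval $\{0 \leq u_I \leq 1\}$; the subcritical bound via the linear majorant and the self-adjoint spectral theorem is correct (note $\lVert W(x,\cdot)\rVert_2 \leq 1$ for $W\in\mathcal{W}_0$, so the Cauchy--Schwarz step is uniform in $x$); the Lyapunov functional $\alpha(t)=\int u_I f_1$ at $R_0=1$ computes exactly as you say, using symmetry of $\W$ and $\beta\W f_1=f_1$; and $g^*$ as a fixed point of the concave increasing map $G$ with the monotone squeeze from $\epsilon f_1$ below and $\mathbf{1}$ above is the standard Krasnoselskii-type construction. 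Two points need to be made rigorous before this closes. First, LaSalle in $C[0,1]$ requires precompactness of the forward orbit; this does hold here because $\W$ sends $L^\infty$ into $C[0,1]$ with a uniform modulus of continuity (continuity of $W$ on a compact square) and the Duhamel form $u_I(t)=e^{-t}u_I(0)+\int_0^t e^{-(t-s)}\beta(1-u_I)\W u_I\,\d s$ makes the orbit asymptotically compact, but you should say so; otherwise the critical case is not actually concluded. Second, the ``instantaneous spreading of positivity'' in the supercritical case is a genuine lemma, and the pointwise lower barrier $u_I(t_0)\geq \epsilon f_1$ is delicate because Krein--Rutman gives $f_1>0$ almost everywhere but not necessarily bounded away from zero on $[0,1]$; the argument should either produce the barrier in $L^2$ (then use monotonicity and pointwise convergence of the bounding trajectories) or prove that $\{x:u_I(t,x)>0\}$ grows to full measure using the connectivity hypothesis. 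Neither issue is a wrong turn, but both are steps, not remarks.
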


Note that $R_0$ can be written in the form
\begin{align*}
R_0 = \beta \lambda_1(\W)
\end{align*}
in which $\lambda_1(\W)$ is as defined before, i.e. the eigenvalue of operator $\W$ with the largest absolute value. Theorem \ref{t:threshold} states that depending on the value of $\beta$, the following two outcomes are possible:
\begin{itemize}
\item If $\beta \leq \dfrac{1}{\lambda_1(\W)}$, then the disease dies out after some time.

\item If $\beta > \dfrac{1}{\lambda_1(\W)}$, then the ratio of infected people tends to a constant value.
\end{itemize}
Corresponding statements have been known to hold for epidemic thresholds for individual-based mean-field (IBMF) models \cite{threshold2008} and $N$-intertwined mean-field approximation (NIMFA) \cite{simon2017NIMFA}. For general finite networks, only the inequality $\beta_c>\frac{1}{\lambda_1}$ is known, but numerical simulations indicate it to be a very good approximation \cite{satorras2015}. Interestingly, Theorem \ref{t:threshold} shows it holds with equality for the graphon case.

Another interesting question is the value of function $g^*(x)$. It can be seen that a closed form in the case of the general $\W$ operator does not exist. However, if we suppose that it is separable, i.e. $W(x,y)=\varphi(x)\varphi(y)$, then it can be expressed in an explicit form.

\begin{prop}
Suppose that the kernel $W(x,y)$ has the form $W(x,y)=\varphi(x)\varphi(y)$. Then the equilibrium solution $g^*$ has the form
\begin{align}\label{g_def}
g^*(x)= \dfrac{\beta \varphi(x) k}{1 + \beta \varphi(x) k}
\end{align}
in which $k \in \mathbb{R}$ is the solution of the following (implicit) equation:
\begin{align}\label{k_def}
1 = \int_0^1 \dfrac{\beta (\varphi(x))^2 }{1 + \beta \varphi(x) k} \d x
\end{align}
\end{prop}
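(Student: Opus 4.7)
The plan is to substitute the separable ansatz into the stationary version of \eqref{eq:PDESIS}, exploit the rank-one structure of $\W$ to collapse the integral operator into multiplication by a scalar, and then close the expression through a self-consistency equation for that scalar.

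First I would derive the stationary equation. Setting $\partial_t u_I = 0$ in \eqref{eq:PDESIS} and solving algebraically for $g^*(x)$ yields
\begin{equation*}
g^*(x) = \frac{\beta(\W g^*)(x)}{1 + \beta(\W g^*)(x)},
\end{equation*}
which is well-defined on $[0,1]$ as long as $g^* \geq 0$ (so that $(\W g^*)(x) \geq 0$), a condition already guaranteed by Theorem \ref{t:threshold}.

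Next I would use separability. Because $W(x,y)=\varphi(x)\varphi(y)$, for any integrable $f$ one has $(\W f)(x)=\varphi(x)\int_0^1 \varphi(y)f(y)\,\d y$. Introducing the scalar
\begin{equation*}
k := \int_0^1 \varphi(y)\,g^*(y)\,\d y,
\end{equation*}
this reduces to $(\W g^*)(x) = \varphi(x)k$. Substituting into the stationary equation immediately produces \eqref{g_def}.

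Finally I would close the loop by plugging \eqref{g_def} back into the definition of $k$:
\begin{equation*}
k = \int_0^1 \varphi(x)\cdot\frac{\beta\varphi(x)k}{1+\beta\varphi(x)k}\,\d x.
\end{equation*}
The one subtle step, and the main thing to justify carefully, is dividing through by $k$ to obtain \eqref{k_def}. This is the point where the hypothesis that $g^*$ has nonzero integral (i.e.\ the supercritical regime of Theorem \ref{t:threshold}) is used: if $k = 0$, then \eqref{g_def} forces $g^* \equiv 0$, contradicting the assumption that $g^*$ is the nontrivial equilibrium. For completeness one may also verify that a positive $k$ solving \eqref{k_def} exists and is unique: the right-hand side of \eqref{k_def} is strictly decreasing in $k>0$, equals $\beta\int_0^1\varphi(x)^2\,\d x = \beta\lambda_1(\W)$ at $k=0$, and tends to $0$ as $k\to\infty$, so a unique positive root exists precisely when $\beta\lambda_1(\W) > 1$, in agreement with the threshold condition of Theorem \ref{t:threshold}.
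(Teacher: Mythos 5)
Your proof is correct and follows essentially the same route as the paper: set $\partial_t u_I=0$, exploit separability to write $(\W g^*)(x)=\varphi(x)k$ with $k=\int_0^1\varphi\,g^*$, solve algebraically for $g^*$, and close with the self-consistency equation for $k$. You add two refinements the paper leaves implicit, namely the justification that $k\neq0$ before cancelling it, and the monotonicity argument giving existence and uniqueness of a positive root of \eqref{k_def} exactly when $\beta\lambda_1(\W)>1$; both are valid and in agreement with Theorem~\ref{t:threshold}.
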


\begin{proof}
To compute the equilibrium solution, we have to solve the equation
\begin{align*}
0= \partial_t u_I(t,x) = -u_I(t,x)+ \beta \left(1-u_I(t,x) \right)\int_0^1 W(x,y) u_I(t,y) dy.
\end{align*}
Or, in other words,
\begin{align*}
0= -g^*(x)+ \beta \left(1-g^*(x) \right) \varphi(x) \int_0^1 \varphi(y) g^*(y) \d y.
\end{align*}
Using the notation $k:=\int_0^1 \varphi(y) g^*(y) dy$, the above equation has the form:
\begin{align*}
g^*(x) = \beta k \varphi(x) - \beta k g^*(x) \varphi(x),
\end{align*}
from which we get
\begin{align*}
g^*(x) = \dfrac{\beta \varphi(x) k}{1 + \beta \varphi(x) k}
\end{align*}
Now if we multiply both sides by $\varphi(x)$ and integrate it, then we get
\begin{align*}
k = \int_0^1 \dfrac{\beta (\varphi(x))^2 k}{1 + \beta \varphi(x) k} \d x,
\end{align*}
which is the same as \eqref{k_def}.
\end{proof}
Note that in the form \eqref{g_def}, the constant $k$ is unknown and its computation might be hard depending on the form of $\varphi(x)$, and might involve iteration methods. Nevertheless, the form \eqref{g_def} still gives us useful information about the form of our equilibrium solution.

\subsection{ODE approximation via discretisation}
\label{s:ODEdiscr}

In this section we show that by discretizing equation \eqref{eq:PDE}, we approximate said PDE to arbitrary precision in the finite time horizon. This both enables numerical investigations of the system and provides a useful tool for later proofs.  

$W^{(M)} $ will denote the discretized version of $W$ with cell size $\frac{1}{M}$, that is, 
$$W^{(M)}\left( x,y\right)=\sum_{k,l=1}^M W\left( \frac{k}{M}, \frac{l}{M}\right) \1{I_k^M}(x)\1{I_l^M}(y). $$
The corresponding integral operator is $\W^{(M)} $. $\W$ will be approximated by $\W^{(M)}$ for large $M$; the corresponding error term is defined as
\begin{align}
\label{def:delta}
& \delta_{M} :=\sup_{(x,y) \in [0,1]^2} \left | W^{(M)}(x,y)-W(x,y)  \right |.
\end{align}
$W$ is continuous and thus uniformly continuous on $[0,1]^2$, so $\lim_{M\to\infty}\delta_M = 0$. If we also assume $W$ to be Lipschitz-continuous with Lipschitz-constant $L_W$, then $\delta_M \leq \frac{L_W}{M}$.

\begin{lemma}
\label{W^M approx W}
Assume $W$ is continuous. Then
\begin{align*}
\left \|\W^{(M)}-\W \right \|_{\textrm{op.},1} \leq \delta_M
\end{align*}
\end{lemma}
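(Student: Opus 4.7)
The plan is to bound the operator norm directly from its definition: $\|\W^{(M)}-\W\|_{\textrm{op.},1}=\sup_{\|f\|_1=1}\|(\W^{(M)}-\W)f\|_1$. So I would fix an arbitrary $f\in L^1[0,1]$ and control $\|(\W^{(M)}-\W)f\|_1$ in terms of $\|f\|_1$ and $\delta_M$.

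First, I would write the difference explicitly as an integral against the kernel difference:
\begin{align*}
((\W^{(M)}-\W)f)(x)=\int_0^1 \bigl(W^{(M)}(x,y)-W(x,y)\bigr)f(y)\,\d y.
\end{align*}
Then, taking the $L^1$ norm in $x$ and applying the triangle inequality (for integrals) followed by Fubini's theorem,
\begin{align*}
\|(\W^{(M)}-\W)f\|_1
&\leq \int_0^1\int_0^1 \bigl|W^{(M)}(x,y)-W(x,y)\bigr|\,|f(y)|\,\d y\,\d x\\
&= \int_0^1 |f(y)| \left(\int_0^1 \bigl|W^{(M)}(x,y)-W(x,y)\bigr|\,\d x\right)\d y.
\end{align*}
Next, I would invoke the uniform bound coming from the definition \eqref{def:delta}: $|W^{(M)}(x,y)-W(x,y)|\leq \delta_M$ for all $(x,y)\in[0,1]^2$. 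This immediately gives the inner integral $\leq \delta_M$, and thus $\|(\W^{(M)}-\W)f\|_1\leq \delta_M\|f\|_1$. Taking the supremum over $f$ with $\|f\|_1=1$ yields the claim.

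Since $W$ is assumed continuous on the compact set $[0,1]^2$, it is uniformly continuous, so $\delta_M$ is finite (indeed $\delta_M\to 0$ as $M\to\infty$) and all the integrals above are finite, so no issue arises in applying Fubini. There is no serious obstacle: the argument is essentially the standard fact that the $L^\infty$ norm of a kernel controls its $L^1\to L^1$ operator norm (in fact it controls $L^p\to L^p$ for every $p$ by Schur/Young-type estimates). The only subtlety worth flagging is that the definition of $\delta_M$ takes a supremum rather than an essential supremum, which is fine here because $W$ is continuous.
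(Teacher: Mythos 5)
Your proof is correct and follows essentially the same route as the paper's: bound $\|(\W^{(M)}-\W)f\|_1$ by the double integral of $|W^{(M)}-W|\,|f|$ and then invoke the uniform bound $\delta_M$ from \eqref{def:delta}. The extra Fubini step you spell out is implicit in the paper's estimate and changes nothing of substance.
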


\begin{proof}(Lemma \ref{W^M approx W})	

Let $ \varphi$ be an $L^1([0,1])$ function. Then
\begin{align*}
&\left \|\left(\W^{(M)}-\W \right) \varphi \right \|_1= \int_{0}^{1} \left | \left(\W^{(M)}-\W \right) \varphi(x) \right | \d x \leq \\
&\int_{0}^{1}  \int_{0}^{1}\left |W^{(M)}(x,y)-W(x,y) \right | \cdot \left | \varphi(y)\right| \d y \d x \leq \\
&\delta_M \int_{0}^{1} |\varphi(y)| \d y= \delta_M \|\varphi \|_1 \Rightarrow \\
&  \left \|\W^{(M)}-\W   \right \|_{\textrm{op.},1} \leq \delta_M
\end{align*}
\end{proof}
We create an auxiliary system of $S\times M$ ODEs of the form 
\begin{align}
\label{eqv_k}
\frac{\d}{\d t}v_{k}^M(t)= Q\left( \frac{1}{M}\sum_{l=1}^M W \left( \frac{k}{M},\frac{l}{M}\right)v_{l}^M(t) \right)v_{k}^M(t),\quad 1\leq k\leq M,
\end{align} 
and also use the notation $v_k^M(t)= \left( v_{k,s}^M(t)\right)_{s \in \S}$. 

Local existence of the solution of system $\eqref{eqv_k} $ follows from the local-Lipschitz continuity of the right hand side. Also, with the same technique as used in Section \ref{s:wellposedness}, it extends to global existence and $v_i^N(t) \in \Delta $. 

Using the notation
$$v^M(t,x):= \sum_{k=1}^M v_k^M(t) \1{I_k^M}(x),$$
we can rewrite \eqref{eqv_k} as
\begin{align}
\label{eqv}
& \partial_{t}v^M(t,x)= Q\left( \W^{(M)} v^M(t,x)\right)v^M(t,x).
\end{align} 
	
The following lemma states that $v^M(t)$ is a good approximation of $u(t)$.

\begin{lemma}
\label{v^M approx u} For arbitrary $T>0$ we have
\begin{align*}
&  \sup_{0 \leq t \leq T} \left \|v^M(t)-u(t)  \right \|_1=O \left(  \left \| v^M(0)-u(0) \right \|_1 +  \left \|\W^{(M)} -\W \right \|_{\textrm{op.},1} \right).
\end{align*}
\end{lemma}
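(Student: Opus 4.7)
The plan is a standard Grönwall argument on the error $e(t,x) := v^M(t,x) - u(t,x)$. Writing both \eqref{eq:PDE} and \eqref{eqv} in integral form and subtracting gives
$$e(t,x) = e(0,x) + \int_0^t \bigl[Q(\W^{(M)} v^M(\tau,x))\,v^M(\tau,x) - Q(\W u(\tau,x))\,u(\tau,x)\bigr]\,\d\tau,$$
and the integrand is split by the familiar add-and-subtract trick as
$$\bigl[Q(\W^{(M)} v^M) - Q(\W u)\bigr]\,v^M + Q(\W u)\,[v^M - u],$$
so that the two resulting pieces can be estimated separately in $\|\cdot\|_1$.

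For the first piece I would use the global Lipschitz constant $L_Q$ of $Q$ together with the pointwise identity $\|v^M(\tau,x)\|=1$ (which follows from $v^M(\tau,\cdot)\in\Delta$, established by the same Kolmogorov-equation argument as in Theorem \ref{t:exist}) to reduce its $L^1$-norm to $L_Q\|\W^{(M)} v^M(\tau) - \W u(\tau)\|_1$. A further telescoping
$$\W^{(M)} v^M - \W u = \W^{(M)}(v^M - u) + (\W^{(M)} - \W)\,u,$$
combined with the elementary bound $\|\W^{(M)}\|_{\textrm{op.},1}\le 1$ (which holds because $W^{(M)}\in\mathcal{W}_0$, by the same computation that gives the analogous bound for $\W$) and $\|u(\tau)\|_1=1$, bounds this contribution by $L_Q\bigl(\|e(\tau)\|_1 + \|\W^{(M)}-\W\|_{\textrm{op.},1}\bigr)$.

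For the second piece, $\W u(\tau,x)$ has nonnegative components summing to $d_W(x)\le 1$, so by the Lipschitz continuity of $Q$ there exists a constant $C_Q$ with $\|Q(\W u(\tau,x))\|\le C_Q$ for all $\tau,x$; hence $\|Q(\W u)[v^M-u]\|_1\le C_Q\|e(\tau)\|_1$. Assembling the two estimates gives an integral inequality of Grönwall form,
$$\|e(t)\|_1 \le \|e(0)\|_1 + L_Q\,T\,\|\W^{(M)}-\W\|_{\textrm{op.},1} + (L_Q+C_Q)\int_0^t \|e(\tau)\|_1\,\d\tau,$$
and Grönwall's lemma then produces the claimed $O$-bound uniformly on $[0,T]$.

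There is no essential obstacle; the only care needed is to work consistently with the $L^1$-operator norm $\|\cdot\|_{\textrm{op.},1}$ (matching the ambient $L^1$-norm on state densities), and to invoke the simplex-invariance of both $u$ and $v^M$ so that all constants above are independent of $M$ and of the initial data.
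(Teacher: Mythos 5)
Your proof is correct and follows essentially the same Gr\"onwall argument as the paper: same integral form, same add-and-subtract splitting of the integrand, and the same final inequality. The only cosmetic difference is in telescoping $\W^{(M)}v^M - \W u$: you write it as $\W^{(M)}(v^M-u) + (\W^{(M)}-\W)u$ and invoke $\|\W^{(M)}\|_{\textrm{op.},1}\le 1$, while the paper writes it as $(\W^{(M)}-\W)v^M + \W(v^M-u)$ and invokes $\|\W\|_{\textrm{op.},1}\le 1$; both yield the identical bound.
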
	

Note that if we further assume $u(0,x)$ and $W$ are Lipschitz and choose either
$$ v^M_k(0)=\frac{1}{1/M}\int_{I_k^M} u(0,x)\d x \quad \textrm{or}\quad v_k^M(0)= u \left( 0, \frac{k}{M} \right),$$ the error term becomes $O \left( \frac{1}{M} \right) $.
	
\begin{proof} (Lemma \ref{v^M approx u})
Assume $0 \leq t \leq T$. Then
\begin{align*}
& \left \| v^M(t,x)-u(t,x) \right \| \leq  \left \| v^M(0,x)-u(0,x) \right \|\\
&\qquad +\int_{0}^{t} \left \| Q\left(\W^{(M)} v^M(\tau,x) \right)v^M(\tau,x)-Q\left(\W u(\tau,x) \right)u(\tau,x)\right \| \d \tau.
\end{align*}

We can further decompose the errors as
\begin{align*}
& \left \| Q\left(\W^{(M)} v^M(\tau,x) \right)v^M(\tau,x)-Q\left(\W u(\tau,x) \right)u(\tau,x)\right \| \leq \\
& \left \|Q\left(\W^{(M)} v^M(\tau,x)\right)-Q\left(\W u(\tau,x)\right) \right \| \cdot \left \|v^M(\tau) \right \|+\\
& \qquad \left \| Q\left(\W u(\tau,x) \right) \right \| \cdot \left \|v^M(\tau,x)-u(\tau,x) \right \| \leq \\
& L_Q \left \|\W^{(M)} v^M(\tau,x)-\W u(\tau,x) \right \|+Q_{\max}\left \|v^M(\tau,x)-u(\tau,x) \right \| 
\end{align*}
Note that in the upper bound $Q_{\max}$ we implicitly used $ \W u(t,x) \in \Delta $. 
Using these upper bounds, integrating with respect to $x$ yields
\begin{align*}
\left \| v^M(t)-u(t) \right \|_1 &\leq \left \| v^M(0)-u(0) \right \|_1 \\ 
& + \int_{0}^{t} L_Q \left \|\W^{(M)} v^M(\tau)-\W u(\tau) \right \|_1+Q_{\max}\left \|v^M(\tau)-u(\tau) \right \|_1  \d \tau
\end{align*}
Additional error decomposition gives rise to
\begin{align*}
& \left \|\W^{(M)} v^M(\tau)-\W u(\tau) \right \|_1 \leq\\
& \left \|\W^{(M)} -\W \right \|_{\textrm{op.},1} \cdot \left \| v^M(\tau) \right \|_1+\left \| W \right \|_{\textrm{op.},1} \cdot \left \| v^M(\tau)- u(\tau) \right \|_1 \leq \\
& \left \|\W^{(M)} -\W \right \|_{\textrm{op.},1} + \left \| v^M(\tau)- u(\tau) \right \|_1,
\end{align*}
hence
\begin{align*}
\left \| v^M(t)-u(t) \right \|_1 &\leq \left \| v^M(0)-u(0) \right \|_1 +L_Q T \left \|\W^{(M)} -\W \right \|_{\textrm{op.},1}\\ 
& + \left(L_Q+ Q_{\max} \right)\int_{0}^{t} \left \|v^M(\tau)-u(\tau) \right \|_1  \d \tau.
\end{align*}
The last step is the application of Gronwall's inequality.
\begin{align*}
&  \sup_{0 \leq t \leq T}\left \| v^M(t)-u(t) \right \|_1 \leq\\
&\quad\big(  \left \| v^M(0)-u(0) \right \|_1
+L_Q T \left \|\W^{(M)} -\W \right \|_{\textrm{op.},1} \big)e^{\left(L_Q+ Q_{\max} \right)T}.
\end{align*}
\end{proof}

\section{Convergence of the stochastic process}	
\label{s:convproof}

We use the discretization of $[0,1]$ into $M$ parts from Section \ref{s:ODEdiscr} for the stochastic process as well. The intervals $I_1^M, \dots , I_M^M$ will be referred to as \emph{boxes}. The heuristic idea is that choosing $M$ large enough ensures that most vertices within a box will have similar properties due to the continuity of $W$, thus a box can be treated homogeneously. On the other hand, each box contains roughly $N/M$ vertices, thus, for $N>\!\!>M$, the random fluctuation within a given box is negligible, and therefore we can describe the dynamics of the boxes via \eqref{eqv_k}, which is a good approximation of $ \eqref{eq:PDE} $ according to Lemma \ref{v^M approx u}.

This argument also hints at the order at which $N$ and $M$ should be increased. We will use fixed $M$ first with $N\to\infty$, then take the limit $M\to\infty$ after.    

From now on, until stated otherwise, $M$ is considered to be fixed.

Let $V_k^{N,M} $ denote the indices of the vertices in box $k$:
$$V_k^{N,M}=\{i:1\leq i\leq N, \frac{i}{N} \in I_k^M\}.$$
$V_k^{N,M}$ contains roughly $N/M$ vertices:
$$ \left | \ \left |V_k^{N,M} \right |-N/M \right | \leq 1 .$$

The analogue of state density \eqref{eq:statedensityN} for the boxes is
\begin{align*}
& \xi_{k}^{N,M}(t):= \frac{1}{N/M} \sum_{i \in V_k^{N,M}} \xi_{i}^N(t), \\
& \xi^{N,M}(t,x):=\sum_{k=1}^M \xi_{k}^{N,M}(t) \1{I_k^M}(x).
\end{align*}
$ \bigcup_{i \in V_{k}^{N,M}} I_i^N $ is close to $ I_k^M $. Fo the error $D_k^{N,M} :=I_k^M \oplus\bigcup_{i \in V_{k}^{N,M}} I_i^N$ we have the upper bound
\begin{align}
\label{intervalerror}
& \mu \Big(  D_k^{N,M} \Big) \leq \frac{2}{N},
\end{align}
where $ \mu $ is the Lebesgue measure and $ \oplus $ denotes symmetric difference.

Since $ \| \xi^N(t,x)  \|=1 $, \eqref{intervalerror} implies
\begin{align}
\label{eq:inttransform}
& \frac{1}{1/M} \int_{I_k^M} \xi^{N}(t,x) \d x = \xi_k^{N,M}(t)+O \left( \frac{1}{N}\right) .
\end{align}

We use Poisson representation (see e.g.~\cite{hht2014}) for the dynamics of the process. Since the vertices are not identical (as in Kurtz \cite{kurtz70} or \cite{hht2014}), we need to be cautious to handle them separately. Accordingly, let $\mathcal{N}_{ss',i}^N(t)$ denote a family of independent Poisson processes with rate $1$. $\mathcal{N}_{ss',i}^N(t)$ will correspond to the state transitions of  vertex $i$ from state $s'$ to $s$. The evolution of $\xi_{i,s}^N(t) $ can be formulated as 
\begin{align*}
\xi_{i,s}^N(t)=\xi_{i,s}^N(0)&+ \sum_{s' \neq s} \mathcal{N}_{ss',i}^N \left( \int_{0}^{t} q_{ss'} \left( \phi_{i}^N(\tau) \right) \xi_{i,s'}^N(\tau) \d \tau \right) \\
&-\sum_{s' \neq s} \mathcal{N}_{s's,i}^N \left(  \int_{0}^{t} q_{s's} \left( \phi_{i}^N(\tau) \right) \xi_{i,s}^N(\tau) \d \tau \right).
\end{align*}

Next we group the vertices according to boxes.
\begin{align}
\label{eq:lonform}
\begin{split}
\xi_{k,s}^{N,M}(t)=\xi_{k,s}^{N,M}(0)&+\frac{1}{N/M} \sum_{i \in V_k^{N,M}} \sum_{s' \neq s} \mathcal{N}_{ss',i}^N \left( \int_{0}^{t} q_{ss'} \left( \phi_{i}^N(\tau) \right) \xi_{i,s'}^N(\tau) \d \tau \right) \\
&-\frac{1}{N/M} \sum_{i \in V_k^{N,M}}\sum_{s' \neq s} \mathcal{N}_{s's,i}^N \left(  \int_{0}^{t} q_{s's} \left( \phi_{i}^N(\tau) \right) \xi_{i,s}^N(\tau) \d \tau \right).
\end{split}
\end{align} 

To separate the process into drift and fluctuation terms, we first compute conditional expectation according to the filtration $\mathcal{F}_t$:
\begin{align*}
&\E \left( \left. \mathcal{N}_{ss',i}^N \left( \int_{0}^{t} q_{ss'} \left( \phi_{i}^N(\tau) \right) \xi_{i,s'}^N(\tau) \d \tau \right)   \right | \mathcal{F}_t \right)=\int_{0}^{t} q_{ss'} \left( \phi_{i}^N(\tau) \right) \xi_{i,s'}^N(\tau) \d \tau,
\end{align*}
then define fluctuation terms as
\begin{align*}
U_{ss',k}^{N,M}(t):=& \frac{1}{N/M} \sum_{i \in V_k^{N,M}}  \mathcal{N}_{ss',i}^N \left( \int_{0}^{t} q_{ss'} \left( \phi_{i}^N(\tau) \right) \xi_{i,s'}^N(\tau) \d \tau \right)\\
-&\frac{1}{N/M} \sum_{i \in V_k^{N,M}} \int_{0}^{t} q_{ss'} \left( \phi_{i}^N(\tau) \right) \xi_{i,s'}^N(\tau) \d \tau\\
U_{s,k}^{N,M}(t):=& \sum_{s' \neq s} U_{ss',k}^{N,M}(t)-U_{s's,k}^{N,M}(t) \\
U_{k}^{N,M}(t):=& \left( U_{s,k}^{N,M}(t) \right)_{s \in \S}.
\end{align*}
With this notation, \eqref{eq:lonform} can be written as
\begin{align}
\label{decomposition1}
& \xi_k^{N,M}(t)=\xi_k^{N,M}(0)+U_k^{N,M}(t)+ \int_{0}^{t} \frac{1}{N/M} \sum_{i \in V_k^{N,M}} Q \left( \phi_i^N(\tau) \right) \xi_i^N(\tau) \d \tau.
\end{align}

We aim to show that the Poisson fluctuation terms are negligible for large $N$. In order to do that, for technical simplicity we introduce independent rate 1 Poisson processes $\mathcal{N}_{ss',k}^{N,M}(t)$ for each box, and note that
\begin{align*}
& \sum_{i \in V_k^{N,M}} \mathcal{N}_{ss',i}^N \left( \int_{0}^{t} q_{ss'} \left( \phi_{i}^N(\tau) \right) \xi_{i,s'}^N(\tau) \d \tau \right)\overset{d}{=}\\
&\qquad \mathcal{N}_{ss',k}^{N,M} \left( \sum_{i \in V_k^{N,M}} \int_{0}^{t} q_{ss'} \left( \phi_{i}^N(\tau) \right) \xi_{i,s'}^N(\tau) \d \tau \right) ,
\end{align*}
from which
\begin{align*}
U_{ss',k}^{N,M}(t) \overset{d}{=} &  \frac{1}{N/M}   \mathcal{N}_{ss',k}^{N,M} \left(N/M \int_{0}^{t} \frac{1}{N/M} \sum_{i \in V_k^{N,M}} q_{ss'} \left( \phi_{i}^N(\tau) \right) \xi_{i,s'}^N(\tau) \d \tau \right)\\
-&\frac{1}{N/M} \sum_{i \in V_k^{N,M}} \int_{0}^{t} q_{ss'} \left( \phi_{i}^N(\tau) \right) \xi_{i,s'}^N(\tau) \d \tau \ !
\end{align*}

Now we are ready to address the fluctuation terms.
\begin{lemma}
	\label{l:fluctuation}
	For any $M$ and $T>0$ fixed,
	\begin{align*}
	\lim_{N \to \infty} \sup_{0 \leq t \leq T}\max_{1 \leq k \leq M} \left \| U_k^{N,M}(t) \right \|=0 \st
	\end{align*}  
	where $st.$ denotes stochastic convergence.
\end{lemma}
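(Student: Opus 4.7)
Since $|\S|$ is finite and the outer maximum in the lemma runs over the fixed set of $M$ boxes, it suffices to show that for each fixed triple $(s,s',k)$,
\begin{align*}
\sup_{0\le t\le T}\bigl|U_{ss',k}^{N,M}(t)\bigr|\to 0\quad\text{in probability as }N\to\infty,
\end{align*}
and then take a union bound. Write $n=N/M$ and
\begin{align*}
A_{ss',k}^{N,M}(t):=\frac{1}{n}\sum_{i\in V_k^{N,M}}\int_{0}^{t}q_{ss'}\bigl(\phi_i^N(\tau)\bigr)\xi_{i,s'}^N(\tau)\,\d\tau,
\end{align*}
so that the distributional identity displayed just before the lemma reads
\begin{align*}
U_{ss',k}^{N,M}(t)\overset{d}{=}\frac{1}{n}\mathcal{N}_{ss',k}^{N,M}\bigl(nA_{ss',k}^{N,M}(t)\bigr)-A_{ss',k}^{N,M}(t).
\end{align*}
Since $t\mapsto A_{ss',k}^{N,M}(t)$ is continuous, nondecreasing, and starts at $0$, on the event $\{A_{ss',k}^{N,M}(T)\le B\}$ one has the pathwise bound
\begin{align*}
\sup_{0\le t\le T}\bigl|U_{ss',k}^{N,M}(t)\bigr|\le\sup_{0\le a\le B}\bigl|\tfrac{1}{n}\mathcal{N}_{ss',k}^{N,M}(na)-a\bigr|.
\end{align*}

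The next step is to produce a deterministic $B$, independent of $N$, for which $\pr\bigl(A_{ss',k}^{N,M}(T)\le B\bigr)\to 1$. Global Lipschitz continuity of $q_{ss'}$, $\xi_{i,s'}^N(\tau)\in[0,1]$ and \eqref{phinorm} give $q_{ss'}(\phi_i^N(\tau))\xi_{i,s'}^N(\tau)\le q_{ss'}(0)+L_{ss'}d^N(i)/(N\kappa^N)$, so it suffices to control $\max_i d^N(i)/(N\kappa^N)$. Since $W\in\mathcal{W}_0$, $\E d^N(i)\le N\kappa^N$, and under the hypothesis $\log N/(N\kappa^N)\to 0$ a Chernoff/Bernstein tail bound combined with a union bound over the $N$ vertices yields $\max_i d^N(i)/(N\kappa^N)\le 2$ with probability tending to $1$. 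On that event, together with $|V_k^{N,M}|/n\le 2$ for $N$ large, one obtains $A_{ss',k}^{N,M}(T)\le B:=2T\bigl(q_{ss'}(0)+2L_{ss'}\bigr)$.

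The third ingredient is the functional law of large numbers for a rate-$1$ Poisson process: for any fixed $B>0$,
\begin{align*}
\sup_{0\le a\le B}\bigl|\tfrac{1}{n}\mathcal{N}(na)-a\bigr|\to 0\quad\text{almost surely as }n\to\infty.
\end{align*}
One proves this by applying Doob's $L^2$ maximal inequality to the martingale $\mathcal{N}(s)-s$ on the interval $[0,nB]$, obtaining an $L^2$ bound of order $1/\sqrt{n}$ that gives a.s.\ convergence along a subsequence; the full limit follows by filling in the gaps using monotonicity of $\mathcal{N}$ (a Polya-type argument). Combining the first two steps with this Poisson law of large numbers yields the required convergence for each $(s,s',k)$, and then a union bound over the finitely many triples completes the proof.

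The main obstacle I expect is the uniform-in-$i$ degree bound in the second step: the density hypothesis $\log N/(N\kappa^N)\to 0$ is precisely what allows concentration of each individual degree $d^N(i)$ around its mean to be sharp enough to survive a union bound over all $N$ vertices. Without it, a small number of atypically high-degree vertices could make the Poisson time argument $A_{ss',k}^{N,M}(T)$ unbounded and the whole scheme would break down, which is also consistent with the failure of mean-field convergence in the sparse regime highlighted by Theorem \ref{t:r=1}.
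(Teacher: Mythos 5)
Your proof is correct and follows the same skeleton as the paper: reduce to a fixed triple $(s,s',k)$, transfer to a time-changed rate-$1$ Poisson process, bound the running Poisson clock $A_{ss',k}^{N,M}(T)$ by a deterministic constant with high probability, and conclude via a functional law of large numbers for the Poisson process. The one genuinely different ingredient is how you control the clock. You establish a uniform max-degree bound $\max_i d^N(i)/(N\kappa^N)\le 2$ with high probability using a Chernoff/Bernstein tail estimate plus a union bound over all $N$ vertices. The paper instead only controls the \emph{box-averaged} normalized degree $\frac{1}{N/M}\sum_{i\in V_k^{N,M}} d^N(i)/(N\kappa^N)$, bounding it via $\int_0^1 \frac{1}{\kappa^N}\W^N\mathbf{1}(x)\,\d x$ and the operator-norm convergence supplied by Lemma~\ref{l:main}. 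Both routes exploit the same density hypothesis $\log N/(N\kappa^N)\to 0$; yours is self-contained and proves a strictly stronger statement (a uniform degree bound), while the paper's argument reuses Lemma~\ref{l:main}, which is already a central tool of the whole convergence proof, and only needs the weaker averaged control. One minor inefficiency in your write-up: you upgrade the Poisson FLLN to almost-sure convergence via a subsequence-plus-monotonicity argument, but ``st.''\ here means convergence in probability, which follows in one step from Doob's $L^2$ maximal inequality (giving a bound of order $MC(T)/(N\varepsilon^2)$), and since the auxiliary Poisson processes $\mathcal{N}_{ss',k}^{N,M}$ are drawn fresh for each $N$ there is in any case no natural almost-sure statement to make across $N$.
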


\begin{proof}(Lemma \ref{l:fluctuation})
	
	It is enough to show that $ \sup_{0 \leq t \leq T} \left| U_{ss',k}^{N,M}(t)\right | \to 0 \st $ as $N \to \infty $.
	
	Assume first that there is a constant $C(T)$ such that for all $0 \leq t \leq T $ we have
	\begin{align}
	\label{eq:CT}
	& \frac{1}{N/M} \sum_{i \in V_{k}^{N,M}} \int_{0}^{t} q_{ss'} \left( \phi_{i}^N(\tau) \right) \xi_{i,s'}^N(\tau) \d \tau \leq C(T).
	\end{align}
Then according to Doob's inequality
\begin{align*}
& \pr \left( \sup_{0 \leq t \leq T} \left | U_{ss',k}^{N,M}(t) \right | \geq \varepsilon \right) \leq \pr \left(\sup_{0 \leq t \leq C(T)} \left | \frac{1}{N/M} \mathcal{N}_{ss',k}^{N,M}\left(N/M \cdot t\right)-t \right | \geq \varepsilon \right) \leq \\
& \frac{M^2}{N^2 \varepsilon ^2} \mathbb{D}^2 \left(\mathcal{N}_{ss',k}^{N,M}\left(N/M \cdot C(T)\right) \right)=\frac{MC(T)}{N \varepsilon ^2} \to 0. 
\end{align*}	
	
	Next we show that $C(T)= T \left(2 |q_{ss'}(0)|+2ML_{ss'} \right) $ satisfies \eqref{eq:CT} with high probability.
	\begin{align}
	\nonumber
	& \frac{1}{N/M} \sum_{i \in V_{k}^{N,M}} \int_{0}^{t} q_{ss'} \left( \phi_{i}^N(\tau) \right) \xi_{i,s'}^N(\tau) \d \tau \leq \\
	\nonumber
	& t \cdot |q_{ss'}(0)| \frac{\left|V_k^{N,M} \right |}{N/M}+\frac{L_{ss'}}{N/M} \sum_{i \in V_k^{N,M}} \int_{0}^{t} \left \| \phi_i^N(\tau) \right \| \d \tau \leq \\
	\label{eq:CTbound}
	& T \left( |q_{ss'}(0)| \frac{\left|V_k^{N,M} \right |}{N/M}+\frac{L_{ss'}}{N/M} \sum_{i \in V_k^{N,M}} \frac{d^N(i)}{N \kappa^N} \right)
	\end{align}
For $N\geq M$, we have
\begin{align}
\label{eq:NM2}
\frac{\left|V_k^{N,M} \right |}{N/M} \leq 1+ \frac{1}{N/M} \leq 2 
\end{align}
estimating the first term on the right hand side of \eqref{eq:CTbound}. For the second term,
	\begin{align*}
	& \frac{1}{N/M} \sum_{i \in V_k^{N,M}} \frac{d^N(i)}{N \kappa^N}= \frac{1}{N/M} \sum_{i \in V_k^{N,M}} \frac{1}{\kappa^N} \left(\W^N 1 \right) \left( \frac{i}{N} \right) \leq \\
	& \frac{1}{N/M} \sum_{i =1}^N \frac{1}{\kappa^N} \left(\W^N 1 \right) \left( \frac{i}{N} \right)=M \int_{0}^1 \frac{1}{\kappa^N} \W^N 1(x) \d x.
	\end{align*} 
	Replacing $ \frac{1}{\kappa^N} \W^N $ by $\W $ would result in
	\begin{align*}
	& \int_{0}^1  \W 1(x) \d x = \int_{0}^{1}\int_{0}^{1} W(x,y) \d y \d x  \leq 1.
	\end{align*}
	The corresponding error can be bounded from above by
	\begin{align*}
	&   \int_{0}^{1} \left | \frac{1}{\kappa^N} \W^N 1(x)-\W1(x) \right | \d x \leq \sqrt{ \int_{0}^{1}\left | \frac{1}{\kappa^N} \W^N 1(x)-\W1(x) \right |^2 \d x}= \\
	& \left \| \left[ \frac{1}{\kappa^N} \W^N-\W \right ]1 \right \|_2 \leq \left \| \frac{1}{\kappa^N} \W^N-\W  \right \|_{\textrm{op.}2} \cdot \left \| 1 \right \|_2 = \left \| \frac{1}{\kappa^N} \W^N-\W  \right \|_{\textrm{op.}2} \leq 1,
	\end{align*}
	where the last inequality holds for large $N$ with high probability according to Lemma \ref{l:main}. Overall, putting the estimates in the right hand side of \eqref{eq:CTbound} ensures that $C(T)= T \left(2 |q_{ss'}(0)|+2ML_{ss'} \right) $ is an appropriate choice (when $N$ is large enough).
\end{proof}

The next lemma relates $\xi_k^{N,M}(t)$ to $ v_k^M(t) $.

\begin{lemma}
\label{l:hard}
Let $u(0,x)$ and $W$ be continuous and assume
$$ \left \| \xi^N(0)-u(0) \right \|_\I \to 0 \st $$
Let $v_k^M(t) $ be the solution of \eqref{eqv_k} with initial condition $ v_k^M(0)= \frac{1}{1/M}\int_{I_k^M} u(0,x) \d x $.

Then for any $T>0$ and $ \varepsilon >0 $ there exists $K=K(T) >0$ such that
\begin{align*}
& \lim_{N \to \infty} \pr \left(  \sup_{0 \leq t \leq T} \left \| \xi^{N,M}(t)-v^M(t) \right \|_{1} \geq \varepsilon+K \delta_M  \right)=0 
\end{align*} 
\end{lemma}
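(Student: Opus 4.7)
The plan is to express both $\xi^{N,M}(t)$ and $v^M(t)$ via their respective integral equations, subtract them, and split the resulting difference into initial-condition, Poisson fluctuation, and drift contributions. An integral inequality of Gronwall type will then close the argument, with the constants tracked so that the right-hand side is of the form $o_\pr(1) + K\delta_M$.

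For the initial condition, equation \eqref{eq:inttransform} gives $\xi_k^{N,M}(0) = M \int_{I_k^M} \xi^N(0,x)\,\d x + O(1/N)$, and by definition $v_k^M(0) = M\int_{I_k^M} u(0,x)\,\d x$, so the hypothesis $\|\xi^N(0)-u(0)\|_\I \to 0 \st$ yields $\|\xi^{N,M}(0) - v^M(0)\|_1 \to 0 \st$. The Poisson fluctuation term, which appears in \eqref{decomposition1}, is handled directly by Lemma \ref{l:fluctuation}: summed over the finitely many boxes this gives $\sup_{0 \leq t \leq T} \frac{1}{M}\sum_k \|U_k^{N,M}(t)\| \to 0 \st$.

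The substantive work is the drift comparison. Using \eqref{decomposition1} and the integrated form of \eqref{eqv_k}, we must bound
\begin{align*}
\frac{1}{M} \sum_{k=1}^M \left\| \frac{1}{N/M}\sum_{i \in V_k^{N,M}} Q(\phi_i^N(\tau)) \xi_i^N(\tau) - Q(\tilde\phi_k^M(\tau)) v_k^M(\tau) \right\|,
\end{align*}
where $\tilde\phi_k^M(\tau) := \frac{1}{M}\sum_l W(k/M, l/M) v_l^M(\tau)$. Splitting via the Lipschitz constant $L_Q$ and the uniform bound $Q_{\max}$ reduces the task to two sub-comparisons: the environments $\phi_i^N(\tau)$ for $i \in V_k^{N,M}$ versus $\tilde\phi_k^M(\tau)$, and the state vectors averaged over box $k$ versus $v_k^M(\tau)$. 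The latter is essentially $\|\xi^{N,M}(\tau) - v^M(\tau)\|_1$ up to $O(1/N)$ corrections coming from $|V_k^{N,M}| \neq N/M$. For the former, use \eqref{phiW} to write $\phi_i^N(\tau) = \frac{1}{\kappa^N}\W^N \xi^N(\tau, i/N)$ and chain three approximations: $\frac{1}{\kappa^N}\W^N \approx \W$ via Lemma \ref{l:main} (in op-$2$ norm, stochastically), $\W \approx \W^{(M)}$ via Lemma \ref{W^M approx W} (error at most $\delta_M$), and $\W^{(M)}\xi^N(\tau,\cdot)\big|_{I_k^M} = \frac{1}{M}\sum_l W(k/M, l/M) \xi_l^{N,M}(\tau) + O(1/N)$ via \eqref{eq:inttransform}.

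Collecting all estimates produces an inequality of the form
\begin{align*}
\|\xi^{N,M}(t) - v^M(t)\|_1 \leq a_N(T) + C\, T\, \delta_M + C' \int_0^t \|\xi^{N,M}(\tau) - v^M(\tau)\|_1\,\d\tau,
\end{align*}
with $a_N(T) \to 0 \st$ as $N \to \infty$ (for $M$ fixed), so Gronwall's inequality gives $\sup_{0 \leq t \leq T} \|\xi^{N,M}(t) - v^M(t)\|_1 \leq \bigl(a_N(T) + CT\delta_M\bigr)e^{C'T}$. Setting $K := CT\, e^{C'T}$ delivers the claim, since $a_N(T)\, e^{C'T} \to 0 \st$ makes the excess over $K\delta_M$ smaller than any prescribed $\varepsilon$ with probability tending to one. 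The main obstacle is reconciling the $\|\cdot\|_{\textrm{op.},2}$ convergence furnished by Lemma \ref{l:main} with the $L^1$-type norms prevailing elsewhere in the argument; this is the sole place where graph randomness enters beyond Lemma \ref{l:fluctuation}, and it is controlled by observing that $\xi^N(\tau,\cdot)$ is $\{0,1\}^\S$-valued, so $\|\xi^N(\tau)\|_2 \leq \sqrt{|\S|}$ deterministically, and Cauchy--Schwarz on $[0,1]$ converts the resulting $L^2$ output error into an $L^1$ bound.
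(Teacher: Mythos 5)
Your proposal is correct and follows essentially the same route as the paper's proof: integral-equation decomposition into initial-condition, Poisson-fluctuation, and drift terms; Lemma~\ref{l:fluctuation} for the fluctuations; the chain $\frac{1}{\kappa^N}\W^N \to \W \to \W^{(M)}$ together with \eqref{eq:inttransform} for the environments; the Cauchy--Schwarz device using $\|\xi_s^N(\tau)\|_2 \le 1$ to pass from Lemma~\ref{l:main}'s op-$2$ estimate to an $L^1$ bound (the paper's essential trick, which you single out as the key obstacle); and Gronwall to close. The only cosmetic difference is that you apply the Lipschitz split of $Q$ before replacing $\W^N$ by $\W^{(M)}$ while the paper does it afterward, and the paper spells out one more technicality you gloss over — that the sum-to-integral conversion produces the set $I_k^M\cup D_k^{N,M}$ rather than $I_k^M$ exactly, which it handles by enlarging to $[0,1]$ before Cauchy--Schwarz — but neither changes the structure or validity of the argument.
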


\begin{proof}(Lemma \ref{l:hard})

Since both $ \xi^{N,M}(t,x) $ and $v^M(t,x) $ are constant over each of the intervals $I_1^M, \dots, I_M^M $, we can decompose the error term according to the boxes:
\begin{align*}
& \psi^N(t) := \left \| \xi^{N,M}(t)-v^M(t)  \right \|_1= \frac{1}{M}\sum_{k=1}^M \left \| \xi_{k}^{N,M}(t)-v_k^M(t) \right \|.
\end{align*}
First, we show that the error term $\psi^N(0) $ from the initial condition vanishes using \eqref{eq:inttransform} and $ v_k^M(0)= \frac{1}{1/M}\int_{0}^{1}u(0,x) \d x $.
\begin{align*}
&\psi^N(0)=\frac{1}{M}\sum_{k=1}^M \left \| \xi_{k}^{N,M}(0)-v_k^M(0) \right \|= \\
& \sum_{k=1}^{M} \left \| \int_{I_k^M} \xi^N(0,x)- u(0,x) \d x\right \|+O \left( \frac{1}{N}  \right) \leq \\
&M \left \| \xi^N(0)-u(0) \right \|_\I +O \left( \frac{1}{N}  \right) \to 0 \st
\end{align*}	

With the help of \eqref{phiW} we can rewrite \eqref{decomposition1} as
\begin{align*}
 \xi_k^{N,M}(t)=& \xi_k^{N,M}(0)+U_{k}^{N,M}(t)+ \\
 &\int_{0}^{t} \frac{1}{N/M}\sum_{i \in V_{k}^{N,M}}Q \left( \frac{1}{\kappa^N}\W^N \xi^N\left(\tau, \frac{i}{N} \right) \right)\xi^N_i\left(\tau\right)  \d \tau.
\end{align*}
Next we replace $ \frac{1}{\kappa^N}\W^N $ with $ \W^{(M)} $. Using $ \W^{(M)} $ has the advantage that
\begin{align*}
&\W^{(M)}\xi^N\left(\tau, \frac{i}{N} \right)= \sum_{l=1}^M W \left( \frac{k}{M}, \frac{l}{M} \right) \int_{I_l^M} \xi^N(\tau,x) \d x=\\
& \frac{1}{M}\sum_{l=1}^M W \left( \frac{k}{M}, \frac{l}{M} \right) \xi_k^{N,M}(t)+O\left( \frac{1}{N}\right)
\end{align*}
has the same value for all $ i \in V_k^{N,M} $, thus
\begin{align}
\nonumber
& \frac{1}{N/M}\sum_{i \in V_{k}^{N,M}}Q \left( \W^{(M)} \xi^N\left(\tau, \frac{i}{N} \right) \right)\xi^N_i\left(\tau \right)= \\
\nonumber
& Q \left( \W^{(M)} \xi^N\left(\tau, \frac{i}{N} \right) \right) \left [ \frac{1}{N/M}\sum_{i \in V_{k}^{N,M}} \xi^N_i\left(\tau \right) \right] = \\
\label{eq:after W^(M) replacement}
& Q \left( \W^{(M)} \xi^N\left(\tau, \frac{i}{N} \right) \right) \xi_{k}^{N,M}(t)= \\
\nonumber
& Q \left( \frac{1}{M}\sum_{l=1}^M W \left( \frac{k}{M}, \frac{l}{M} \right) \xi_{l}^{N,M}(\tau) \right)\xi_{k}^{N,M}(\tau)+O \left( \frac{1}{N} \right)= \\
\nonumber
& Q \left( \W^{(M)} \xi^{N,M}\left(\tau, \frac{k}{M} \right) \right) \xi_k^{N,M}\left(\tau \right)+O \left( \frac{1}{N} \right).
\end{align}

Next we estimate the corresponding error
\begin{align*}
&  \left \| \frac{1}{N/M}\sum_{i \in V_{k}^{N,M}} \left [Q \left( \frac{1}{\kappa^N}\W^N \xi^N\left(\tau, \frac{i}{N} \right) \right)-Q \left( \W^{(M)} \xi^N\left(\tau, \frac{i}{N} \right) \right) \right] \xi^N_i\left(\tau\right)  \right \| \leq \\
& \frac{L_Q}{N/M}\sum_{i \in V_{k}^{N,M}} \left \|  \left [\frac{1}{\kappa^N}\W^N - \W^{(M)} \right] \xi^N\left(\tau, \frac{i}{N} \right)  \right \|.
\end{align*}

Here, we want to substitute the sums with integrals. However, it does not work as smoothly as in \eqref{eq:inttransform} since  $ \frac{1}{\kappa^N}\W^N \xi^N\left(\tau, \frac{i}{N} \right) $ might not be uniformly bounded in $ N$ or $M$ so we have to be a bit more careful.

Recall the definition of $D_k^{N,M}$ and observe $\bigcup_{i \in V_k^{N,M}}I_i^N \subset I_k \cup D_k^{N,M}$.
\begin{align*}
& \frac{L_Q}{N/M}\sum_{i \in V_{k}^{N,M}} \left \|  \left [\frac{1}{\kappa^N}\W^N - \W^{(M)} \right] \xi^N\left(\tau, \frac{i}{N} \right)  \right \|= \\
& L_Q M \int_{ \bigcup_{i \in V_k^{N,M}}I_i^N} \left \|  \left [\frac{1}{\kappa^N}\W^N - \W^{(M)} \right] \xi^N\left(\tau, x \right)  \right \| \d x \leq \\
& L_Q M \int_{ I_k^M \cup D_{k}^{N,M}} \left \|  \left [\frac{1}{\kappa^N}\W^N - \W^{(M)} \right] \xi^N\left(\tau, x \right)  \right \| \d x= \\
& L_Q M \sum_{s \in \S} \int_{0}^{1}  \1{I_k^M \cup D_{k}^{N,M}}(x) \left | \left [\frac{1}{\kappa^N}\W^N - \W^{(M)} \right] \xi_s^N\left(\tau, x \right) \right | \d x \leq \\ 
&  L_Q M \sum_{s \in \S} \int_{0}^{1}  \1{I_k^M \cup D_{k}^{N,M}}(x) \left | \left [\frac{1}{\kappa^N}\W^N - \W \right] \xi_s^N\left(\tau, x \right)  \right | \d x \\
& + L_Q M \sum_{s \in \S} \int_{0}^{1}  \1{I_k^M \cup D_{k}^{N,M}}(x) \left | \left [\W^{(M)} - \W \right] \xi_s^N\left(\tau, x \right)  \right | \d x
\end{align*}
For the first term,
\begin{align*}
&  L_Q M \sum_{s \in \S} \int_{0}^{1}  \1{I_k^M \cup D_{k}^{N,M}}(x) \left | \left [\frac{1}{\kappa^N}\W^N - \W \right] \xi_s^N\left(\tau, x \right)  \right | \d x \leq \\
&  L_Q M \sum_{s \in \S} \int_{0}^{1} \left | \left [\frac{1}{\kappa^N}\W^N - \W \right] \xi_s^N\left(\tau, x \right) \right | \d x \leq \\
& L_Q M \sum_{s \in \S}  \sqrt{\int_{0}^{1} \left | \left [\frac{1}{\kappa^N}\W^N - \W \right] \xi_s^N\left(\tau, x \right)  \right |^2 \d x}= \\
& L_Q M \sum_{s \in \S} \left \| \left [\frac{1}{\kappa^N}\W^N - \W \right] \xi_s^N\left(\tau \right)  \right \|_{2} \leq \\
& L_Q M \sum_{s \in \S}  \left  \| \frac{1}{\kappa^N}\W^N - \W \right \|_{\textrm{op.},2}  \cdot \left \| \xi_s^N\left(\tau \right) \right \|_2 \leq \\
& L_Q M | \S | \left  \| \frac{1}{\kappa^N}\W^N - \W \right \|_{\textrm{op.},2}.
\end{align*}

As for the second term,
\begin{align*}
& \left | \left [\W^{(M)} - \W \right] \xi_s^N\left(\tau, x \right)  \right |   \leq \int_{0}^{1} | W^{M}(x,y)-W(x,y)| \xi_s^N (\tau,y) \d y \leq \\
& \delta_M\int_{0}^{1} \xi_s^N (\tau,y) \d y= \delta_M \bar{\xi}^N_{s}(t),
\end{align*}
so based on \eqref{intervalerror} we have
\begin{align*}
& L_Q M \sum_{s \in \S} \int_{0}^{1}  \1{I_k^M \cup D_{k}^{N,M}}(x) \left | \left [\W^{(M)} - \W \right] \xi_s^N\left(\tau, x \right)  \right | \d x \leq  \\
& L_Q M \delta_M \left( \int_{0}^{1}  \1{I_k^M \cup D_{k}^{N,M}}(x)  \d x \right) \underbrace{ \sum_{s \in \S} \bar{\xi}_{s}^N(t)}_{=1}=L_Q \delta_M+O \left( \frac{1}{N}\right)
\end{align*}


These bounds and \eqref{eq:after W^(M) replacement} yield
\begin{align*}
&\psi^N(t) \leq \psi^N(0)+\frac{1}{M} \sum_{k=1}^{M} \left \| U_k^{N,M}(t)\right \|+ TL_Q M | \S | \left  \| \frac{1}{\kappa^N}\W^N - \W \right \|_{\textrm{op.},2}+\\
&\qquad\qquad O \left( \frac{1}{N} \right)+TL_Q \delta_M+ \\
&\frac{1}{M}\sum_{k=1}^M \left \| \int_{0}^{t} Q \left( \W^{(M)} \xi^{N,M}\left(\tau, \frac{k}{M} \right) \right) \xi^{N,M}_k\left(\tau \right)- Q \left( \W^{(M)} v^M\left(\tau, \frac{k}{M} \right) \right) v^M_k\left(\tau \right) \d \tau \right \|
\end{align*}
To take care of the last term, we employ similar techniques shown in the proof of Lemma \ref{v^M approx u}, using \eqref{eq:NM2}.

\begin{align*}
& \frac{1}{M}\sum_{k=1}^M \left \|  Q \left( \W^{(M)} \xi^{N,M}\left(\tau, \frac{k}{M} \right) \right) \xi^{N,M}_k\left(\tau \right)- Q \left( \W^{(M)} v^M\left(\tau, \frac{k}{M} \right) \right) v^M_k\left(\tau \right)  \right \| \leq \\
& \frac{1}{M}\sum_{k=1}^M \left \|  Q \left( \W^{(M)} \xi^{N,M}\left(\tau, \frac{k}{M} \right) \right) - Q \left( \W^{(M)} v^M\left(\tau, \frac{k}{M} \right) \right)  \right \| \cdot \left \| \xi^{N,M}_k\left(\tau \right) \right \|+\\
& \frac{1}{M}\sum_{k=1}^M \left \| Q \left( \W^{(M)} v^M\left(\tau, \frac{k}{M} \right) \right) \right \| \cdot \left \|  \xi^{N,M}_k\left(\tau \right)-  v^M_k\left(\tau \right)  \right \| \leq \\
& \frac{2L_Q}{M}\sum_{k=1}^M \left \|   \W^{(M)} \xi^{N,M}\left(\tau, \frac{k}{M} \right)  -  \W^{(M)} v^M\left(\tau, \frac{k}{M} \right)   \right \|+ \\
& \frac{Q_{\max}}{M}\sum_{k=1}^M \left \|  \xi^{N,M}_k\left(\tau \right)-  v^M_k\left(\tau \right)  \right \| = \\
& \frac{2L_Q}{M}\sum_{k=1}^M \left \|   \frac{1}{M}\sum_{l=1}^M W \left( \frac{k}{M}, \frac{l}{M} \right) \left[ \xi_{l}^{N,M}(\tau)-v_{l}^{M}(\tau) \right ]  \right \|+Q_{\max} \psi^N(\tau) \leq \\
& \frac{2L_Q}{M}\sum_{k=1}^M    \frac{1}{M}\sum_{l=1}^M  \left \| \xi_{l}^{N,M}(\tau)-v_{l}^{M}(\tau) \right \|+Q_{\max} \psi^N(\tau)= \\
& \left(2L_Q+ Q_{\max}\right)\psi^N(\tau)=:L \psi^N(\tau)
\end{align*}

Using this bound we have
\begin{align*}
&\psi^N(t) \leq  \psi^N(0)+\frac{1}{M} \sum_{k=1}^{M} \left \| U_k^{N,M}(t)\right \|+ TL_Q M | \S | \left  \| \frac{1}{\kappa^N}\W^N - \W \right \|_{\textrm{op.},2}+\\
& O \left( \frac{1}{N} \right)+TL_Q \delta_M+L \int_{0}^{t} \psi^N(\tau) \d \tau,
\end{align*}
then, using Lemma \ref{l:main} and Lemma \ref{l:fluctuation} with Gronwall:
\begin{align*}
& \sup_{0 \leq t \leq T} \psi^N(t) \leq \left( \psi^N(0)+\frac{1}{M} \sum_{k=1}^{M} \sup_{0 \leq t \leq T} \left \| U_k^{N,M}(t)\right \|+  \right. \\
& \left. TL_Q M | \S | \left  \| \frac{1}{\kappa^N}\W^N - \W \right \|_{\textrm{op.},2} +O \left( \frac{1}{N} \right)+TL_Q\delta_M\right )e^{LT} \\
& \to TL_Q e^{LT} \delta_M =:K \delta_M \st
\end{align*}
\end{proof}

\begin{proof}(Theorem \ref{t:main}.)

We decompose the error into two parts.
\begin{align*}
& \sup_{0 \leq t \leq T} \left \| \xi^N(t)-u(t) \right \|_\I \leq  \sup_{0 \leq t \leq T} \left \| \xi^N(t)-v^M(t) \right \|_\I+\sup_{0 \leq t \leq T} \left \| v^M(t)-u(t) \right \|_\I
\end{align*}	

According to Lemma \ref{W^M approx W} and \ref{v^M approx u} for all $ \varepsilon >0 $ there is a large enough $M$ such that
\begin{align*}
& \sup_{0 \leq t \leq T} \left \| v^M(t)-u(t) \right \|_\I \leq \sup_{0 \leq t \leq T} \left \| v^M(t)-u(t) \right \|_1 =\\
&O \left( \left \| v^M(0)-u(0) \right \|_1+ \left \| \W^{(M)}-\W \right \|_{\textrm{op.},1} \right) \leq \varepsilon.
\end{align*} 
	
For an interval $ J \in \I$, let $H_J^{M} $ denote the values of $ k$ for which $I_k^M \subset J $. Then we have
$$ \mu( J / \bigcup_{k \in H_J^M}I_k^M  ) \leq \frac{2}{M}.$$

Using Lemma \ref{l:hard}, for large enough $N$ and $M$ we have
\begin{align*}
&  \sup_{0 \leq t \leq T} \left \| \xi^N(t)-v^M(t) \right \|_\I= \sup_{0 \leq t \leq T}  \sup_{J \in \I}\left \| \int_{J} \xi^N(t,x)-v^M(t,x) \d x \right \| \leq \\
&  \sup_{0 \leq t \leq T}  \sup_{J \in \I}\left \| \int_{\bigcup_{k \in H_J^M}I_k^M} \xi^N(t,x)-v^M(t,x) \d x \right \|+\frac{2}{M} \leq \\
&  \sup_{0 \leq t \leq T}  \sup_{J \in \I}\sum_{k \in H_{J}^M}\left \| \int_{I_k^M} \xi^N(t,x)-v^M(t,x) \d x \right \|+\frac{2}{M} \leq 
\end{align*}
\begin{align*}
&   \sup_{0 \leq t \leq T} \frac{1}{M} \sum_{k=1}^M \left \| \frac{1}{1/M} \int_{0}^{1} \xi^N(t,x) \d x -v_k^M(t) \right \|+\frac{2}{M}= \\
&   \sup_{0 \leq t \leq T} \frac{1}{M} \sum_{k=1}^M \left \| \xi_k^{N,M}(t) -v_k^M(t) \right \|+O \left( \frac{1}{N} \right) +\frac{2}{M}= \\
&   \sup_{0 \leq t \leq T} \left \| \xi^{N,M}(t)-v^M(t) \right \|_1+O \left( \frac{1}{N} \right)+\frac{2}{M} \leq \\
&\varepsilon+K\delta_M +O \left(  \frac{1}{N}\right)+\frac{2}{M} \leq 2 \varepsilon
\end{align*}
with high probability. Thus
\begin{align*}
\sup_{0 \leq t \leq T} \left \| \xi^N(t)-u(t) \right \|_{\I} \leq 3 \varepsilon
\end{align*}
for large enough $N$ with high probability.
\end{proof}

The rest of the section provides the proof for Theorem \ref{t:r=1}.

The main idea of the proof is that if only 0 degree vertices are infected initially, then no further infection will occur, leading to an exponential decay in the ratio of infected nodes, which is drastically different compared to the logistic growth described by the mean field limit. 

Let the initial conditions are $ \xi_{i,I}^N(0)=\1{d^N(i)=0} $ and $u(0,x) \equiv e^{-\lambda} $. 

\begin{lemma}
\label{l:initialcond}
\begin{align*}
& \left \| \xi^{N}(0)-u(0,x) \right \|_{\I} \to 0 \st
\end{align*}
\end{lemma}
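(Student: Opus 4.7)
The plan is to reduce the supremum over subintervals of $[0,1]$ to the uniform convergence of a cumulative integral in one parameter, then combine a first/second moment computation specific to the sparse Erdős–Rényi regime with a Glivenko–Cantelli–style monotonicity trick.

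Since $\S=\{S,I\}$ and $\xi^N_S(0,x)+\xi^N_I(0,x)\equiv 1$ while $u_S(0,x)+u_I(0,x)\equiv 1$, the two components contribute equally to $\|\xi^N(0)-u(0)\|_\I$, so it suffices to handle the infected component. Set
\begin{align*}
F_N(t):=\int_0^t \xi^N_I(0,x)\,\d x,\qquad F(t):=e^{-\lambda}t,
\end{align*}
both monotone nondecreasing on $[0,1]$. For any $[a,b]\subseteq[0,1]$,
\begin{align*}
\int_a^b \xi^N_I(0,x)\,\d x - e^{-\lambda}(b-a)=\bigl[F_N(b)-F(b)\bigr]-\bigl[F_N(a)-F(a)\bigr],
\end{align*}
so $\|\xi^N(0)-u(0)\|_\I\leq 4\sup_{t\in[0,1]}|F_N(t)-F(t)|$ and it is enough to prove $\sup_t|F_N(t)-F(t)|\to 0$ stochastically.

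From the block structure of $\xi^N_I(0,\cdot)$, $F_N(t)=\tfrac{1}{N}\sum_{i=1}^{\lfloor tN\rfloor}\1{d^N(i)=0}+O(1/N)$ uniformly in $t$. For sparse Erdős–Rényi with $\kappa^N=\lambda/N$,
\begin{align*}
\pr(d^N(i)=0)=(1-\lambda/N)^{N-1}\to e^{-\lambda},
\end{align*}
so $\E F_N(t)\to F(t)$ uniformly in $t$. For the variance, observe that for $i\neq j$ the set of potential edges incident to $\{i,j\}$ has cardinality exactly $2N-3$; thus $\pr(d^N(i)=0,d^N(j)=0)=(1-\lambda/N)^{2N-3}$ and
\begin{align*}
\cov\bigl(\1{d^N(i)=0},\1{d^N(j)=0}\bigr)=(\lambda/N)(1-\lambda/N)^{2N-3}=O(1/N).
\end{align*}
Hence $\Var F_N(t)=O(1/N)$ uniformly in $t$, and Chebyshev's inequality gives pointwise stochastic convergence $F_N(t)\to F(t)$ for each $t\in[0,1]$.

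Finally, to upgrade pointwise to uniform convergence I exploit monotonicity. Fix $\varepsilon>0$, choose $K$ with $e^{-\lambda}/K<\varepsilon/2$, and set grid points $t_j=j/K$ for $j=0,\dots,K$. Since $F_N$ and $F$ are nondecreasing,
\begin{align*}
\sup_{t\in[0,1]}|F_N(t)-F(t)|\leq \max_{0\leq k\leq K}|F_N(t_k)-F(t_k)|+e^{-\lambda}/K.
\end{align*}
A union bound over the finitely many grid points, combined with the Chebyshev estimate above, then yields $\pr(\sup_t|F_N(t)-F(t)|>\varepsilon)\to 0$. The only mildly delicate point is the combinatorial count producing the $O(1/N)$ covariance, which is what makes the Chebyshev step go through; everything else is routine.
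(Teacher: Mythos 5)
Your proof is correct and reaches the same conclusion, but it packages the passage from pointwise to uniform convergence differently from the paper. You work with the cumulative function $F_N(t)=\int_0^t\xi_I^N(0,x)\,\d x$ and $F(t)=e^{-\lambda}t$, reduce the interval norm via $\|\xi^N(0)-u(0)\|_\I\leq 4\sup_{t\in[0,1]}|F_N(t)-F(t)|$, and then run the classical Glivenko--Cantelli argument: Chebyshev plus a union bound gives concentration on a fixed finite grid, and monotonicity of $F_N$ and $F$ upgrades this to the full supremum. The paper instead reuses the box discretization already built for the main convergence proof: it shows $\xi^{N,M}_{k,I}(0)\to e^{-\lambda}$ for each of $M$ boxes, bounds the interval norm by $\frac{1}{M}\sum_{k=1}^M|\xi_{k,I}^{N,M}(0)-e^{-\lambda}|+\frac{2}{M}+O(1/N)$, and then sends $M\to\infty$. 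The moment calculations at the heart of both arguments are identical: $\pr(d^N(i)=0)=(1-\lambda/N)^{N-1}\to e^{-\lambda}$, and the $2N-3$ potential edges incident to a pair $\{i,j\}$ give $\cov\bigl(\1{d^N(i)=0},\1{d^N(j)=0}\bigr)=(\lambda/N)(1-\lambda/N)^{2N-3}=O(1/N)$, which is exactly what makes Chebyshev bite. Your cumulative-function reformulation is slightly leaner and self-contained, avoiding the $\xi^{N,M}$ machinery; the paper's box argument has the advantage of keeping notation consistent with the surrounding section and with the proof of Lemma~\ref{l:hard}.
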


\begin{proof} (Lemma \ref{l:initialcond})

It is enough to consider the $I$ components as
\begin{align*}
&  \left \| \xi^{N}(0)-u(0,x) \right \|_{\I}= 2 \sup_{J \in \I} \left | \int_{J} \xi_{I}^N(0,x)-e^{-\lambda} \d x \right |. 
\end{align*}

First assume the following is true for all fixed $M$ and $1 \leq k \leq M $:
\begin{align}
\label{eq:fixedM}
& \xi^{N,M}_{k,I}(0) \to e^{-\lambda} \st
\end{align}
Also recall $H_J^{M} $ refers to those $k$ for which $I_k^M \subset J $ and $ \mu ( J / \bigcup_{k \in H_J^M}I_k^M) \leq \frac{2}{M} $. Then, according to \eqref{eq:inttransform},
\begin{align*}
& \sup_{J \in \I} \left | \int_{J} \xi_{I}^N(0,x)-e^{-\lambda} \d x \right | \leq \sup_{J \in \I} \left | \int_{\bigcup\limits_{k \in H_J^M}I_k^M} \xi_{I}^N(0,x)-e^{-\lambda} \d x \right |+\frac{2}{M} \leq \\
& \sup_{J \in \I} \frac{1}{M}\sum_{k \in I_k^M} \left | \frac{1}{1/M}\int_{I_k^M} \xi_{I}^N(0,x) \d x-e^{-\lambda}  \right |+\frac{2}{M}  \leq \\
&   \frac{1}{M}\sum_{k =1}^M \left | \xi_{k,I}^{N,M}(0)-e^{-\lambda}  \right |+\frac{2}{M}+O \left( \frac{1}{N} \right) \to \frac{2}{M} \st
\end{align*}

For the expectation of $\xi_{k,I}^{N,M}(0)$ we have
\begin{align*}
&\E \left( \xi_{k,I}^{N,M}(0) \right)=\frac{1}{N/M}\sum_{i \in V_k^{N,M}} \E(\xi_{i,I}^{N}(0))=\frac{1}{N/M}\sum_{i \in V_k^{N,M}} \pr \left(d^N(i)=0 \right)= \\
& \frac{ \left | V_k^{N,M} \right |}{N/M}\pr \left(d^N(1)=0 \right)= \frac{ \left | V_k^{N,M} \right |}{N/M} \left(1-\frac{\lambda}{N}\right)^{N-1} \to e^{-\lambda},
\end{align*}
so it is sufficient to prove concentration. Chebishev's inequality provides
\begin{align*}
& \pr \left( \left | \xi_{k,I}^{N,M}(0)- \E \left( \xi_{k,I}^{N,M}(0) \right) \right | \geq \varepsilon \right)= \\
& \pr \left(  \left |\sum_{i \in V_k^{N,M}} \left[ \xi_{i,I}^N(0)-\E \left( \xi_{i,I}^N(0) \right)   \right ] \right  | \geq  \frac{\varepsilon N}{M}  \right) \leq \\
& \frac{M^2}{\varepsilon^2 N^2} \sum_{i \in V_k^{N,M}}\sum_{i \in V_k^{N,M}} \cov \left( \xi_{i,I}^N(0), \xi_{j,I}^N(0) \right)= \\
& \frac{M^2}{\varepsilon^2 N^2} \left |V_k^{N,M} \right |\mathbb{D}^2 \left( \xi_{1,I}^N(0) \right) +\frac{M^2}{\varepsilon^2 N^2} {\left |V_k^{N,M} \right | \choose 2 } \cov \left(\xi_{1,I}^N(0),\xi_{1,I}^N(0) \right)= \\
& O \left( \frac{1}{\varepsilon^2 N} \right) \to 0, 
\end{align*}
where we used that $ \xi_{1,I}^N(0), \dots , \xi_{N,I}^N(0) $ have the same distribution and 
\begin{align*}
& \cov \left(\xi_{1,I}^N(0),\xi_{1,I}^N(0) \right)= \E \left(\xi_{1,I}^N(0) \xi_{2,I}^N(0) \right)-\E \left(\xi_{1,I}^N(0) \right)\E \left(\xi_{2,I}^N(0) \right)= \\
& \left(1-\frac{\lambda}{N} \right)^{2N-3}-\left(1-\frac{\lambda}{N} \right)^{2N-2} \leq \frac{\lambda}{N}.
\end{align*}
\end{proof}
Next we show that the state density of the Markov process differs from that of the PDE \eqref{eq:PDE}.

\begin{proof}(Theorem \ref{t:r=1})

The main idea is to compute the derivative of the state density at $t=0$  for both the Markov process and the PDE \eqref{eq:PDE} and check that they are different.

Let us introduce $ \varphi(t):= \bar{u}(0)e^{-t} $. Since $ \xi_i^{N}(t) $ are conditionally independent given the initial conditions and $ \E \left( \xi_{i,I}^{N}(t) \right) \to \varphi(t) $, the law of large numbers implies $ \bar{\xi}_I^{N}(t) \to \varphi(t) \st $ for all $t \geq 0$. 

This can be updated to uniform convergence due to the monotonicity of $ \bar{\xi}_I^{N}(t) $ and $\varphi(t)$. Let $T'$ be such that $ \varphi(T') \leq \varepsilon $. Then we can divide $[0,T'] $ into $0=t_0 < \dots <t_m =T' $ such that for all $1 \leq i \leq m $ we have $ |\varphi(t_i)-\varphi(t_{i-1})| \leq \varepsilon $.

If $t>T'$, then 
\begin{align*}
& \bar{\xi}_I^{N}(t)-\varphi(t) \leq \bar{\xi}_I^{N}(T')=\bar{\xi}_I^{N}(T')-\varphi(T')+\varphi(T') \leq \\
&\bar{\xi}_I^{N}(T')-\varphi(T')+ \varepsilon \leq  2\varepsilon,  \\
& \bar{\xi}_I^{N}(t)-\varphi(t) \geq -\varphi(T') \geq -\varepsilon.
\end{align*}
for large enough $N$ with high probability. 

Similarly, if $t_{i-1} \leq t <t_i $ for some $1 \leq i \leq m $, then
\begin{align*}
& \bar{\xi}_I^{N}(t)-\varphi(t) \leq \bar{\xi}_I^{N}(t_{i-1})-\varphi(t_i)=\bar{\xi}_I^{N}(t_{i-1})-\varphi(t_{i-1})+\varphi(t_{i-1})-\varphi(t_{i}) \leq \\
&\bar{\xi}_I^{N}(t_{i-1})-\varphi(t_{i-1})+ \varepsilon  \leq 2 \varepsilon , \\
& \bar{\xi}_I^{N}(t)-\varphi(t) \geq \bar{\xi}_I^{N}(t_{i})-\varphi(t_{i-1})=\bar{\xi}_I^{N}(t_{i})-\varphi(t_i)+\varphi(t_i)-\varphi(t_{i-1}) \geq \\
& \bar{\xi}_I^{N}(t_{i})-\varphi(t_i)-\varepsilon \geq -2\varepsilon .
\end{align*}
Thus for large $N$ one has $ \left | \bar{\xi}_I^{N}(t)-\varphi(t) \right | \leq 2 \varepsilon$ for all $t \geq 0 $ with high probability, implying
\begin{align*}
 \sup_{0 \leq t } \left | \bar{\xi}^N_{I}(t)-\varphi(t) \right | \to 0 \st
\end{align*}

On the other hand $ \sup_{0 \leq t \leq T} \left | \varphi(t)-\bar{u}_{I}(t) \right |>0 $ since 
$$ \frac{\d}{\d t} \varphi(0)=-\bar{u}_I(0) \neq -\bar{u}_I(0)+\beta \bar{u}_I(0) \left(1-\bar{u}_I(0) \right)= \frac{\d}{\d t} \bar{u}_I(0).  $$

\end{proof}

\section{Conclusion}
\label{s:concl}

We have presented here a detailed setup for local-density dependent Markov processes where the transition rates of the vertices are influenced by the states of their neighbors, identified the mean-field limit process on a graphon and provided rigorous proof of convergence under mild density conditions. We also examined the special case of the SIS process where we computed the epidemic threshold. We also showed that the mean-field limit does not hold when the density condition is violated and the average degree remains $O(1)$.

The question of identifying the mean-field limit when the average degree is $O(1)$ is still open. We expect the limit process to depend highly on both the structure of the network and the Markov process; however, any such result may require tools vastly different from graphons. This is subject to future work.

\bibliographystyle{abbrv}
\bibliography{mf}

\end{document}